\date{}
\definecolor{sah}{rgb}{0.66,0.33, 0.04}
\definecolor{adel4}{cmyk}{1,0,0,0}
\definecolor{adel3}{rgb}{0.66,0.33, 0.04}
\definecolor{adel1}{cmyk}{0,0.20,1,0}
\definecolor{adel2}{cmyk}{0,0.40,1,0.30}
\definecolor{adel0}{rgb}{0.99,0.60, 0.30}
\definecolor{trut}{rgb}{0.99,0.80, 0.00}
\definecolor{trus}{rgb}{0.00, 0.50, 0.00}
 \definecolor{trust}{rgb}{0.99, 0.99, 0.80}
\definecolor{MaCouleur}{rgb}{0,0.9,0.3}
\def\virgp{\raise 2pt\hbox{,}}
\def\({\left(}
\def\){\right)}
\def\<{\langle}
\def\>{\rangle}
\theoremstyle{plain}
\newtheorem*{ex}{Example}
\newtheorem{Theo}{Theorem}
 \newtheorem{Lemm}{Lemma}
\newtheorem{Prop}{Proposition}
 \newtheorem{Coro}{Corollary}
 \newtheorem{Defin}{ Definition}
 \newtheorem{rema}{Remark}
\newcommand{\av}{{\rm Avg}}
\newcommand{\R}{{\mathbb R}}
\def\be#1 {\begin{equation} \label{#1}}
\newcommand{\ee}{\end{equation}}
\newcommand{\mb}{\medskip\noindent}
\newcommand{\LL}{{\mathcal L}}
  \title[Sharp constants for composition with a measure-preserving map]{Sharp constants for composition with a bi-Lipschitz measure-preserving map}
\author[F. Bernicot]{Fr\'ed\'eric Bernicot}
\address{CNRS - Universit\'e de Nantes \\ Laboratoire de Math\`ematiques Jean Leray \\ 2, Rue de la Houssini\`ere F-44322 Nantes Cedex 03, France}
 \email{frederic.bernicot@univ-nantes.fr}
\author[S. Keraani]{Sahbi Keraani}
\address{UFR de math\'ematiques\\ Universit\'e de Lille 1\\
 59655 Villeneuve d'Ascq  Cedex\\   France}
\email{sahbi.keraani@univ-lille1.fr}
\thanks{The two authors are supported by the ANR under the project AFoMEN no. 2011-JS01-001-01.}
\date{\today}
\subjclass[2000]{42B35 ; 42B25}
\keywords{BMO space, Composition, Measure-preserving map}
\begin{document}

\begin{abstract} In this note, we aim to describe sharp constants for the composition operator with a bi-Lipschitz measure-preserving map in several functional spaces (BMO, Hardy space, Carleson measures, ...). It is interesting to see how the measure preserving property allows us to improve these constants. Moreover, we will prove the optimality of our results for the BMO space and describe improved estimates for solutions of transport PDEs.
\end{abstract}
\maketitle

Denote by \mbox{$\LL:=\LL(\R^d)$} the group of all bi-Lipschitz homeomorphisms of \mbox{$\R^d$} (equipped with the composition law \mbox{$\circ$}) and for \mbox{$\phi\in\LL$} let
$$ K(\phi)=K_\phi := \sup_{x\neq y} \frac{|\phi(x)-\phi(y)|}{|x-y|}+\frac{|x-y|}{|\phi(x)-\phi(y)|}$$ 
denote the sum of the Lipschitz constants of \mbox{$\phi$} and \mbox{$\phi^{-1}$}. An easy computation yields that \mbox{$K(\phi)\geq 2$}, with equality if and only if \mbox{$\phi$} is an isometry of \mbox{$\R^d$}, and moreover this quantity is sub-multiplicative: for \mbox{$\phi,\psi\in \LL$} then
$$K(\phi \circ \psi) \leq K(\phi) K(\psi).$$ Consequently,  \mbox{$\phi \mapsto \log (K(\phi))$} is a pseudo-norm on \mbox{$\LL/E(d)$}, where \mbox{$E(d)$} is the set of isometries on \mbox{$\R^d$} to \mbox{$\R^d$}, which enables us to consider the group \mbox{$\LL/E(d)$} as a topological object.

A natural question then arises: for which functional Banach space \mbox{$X$} (that is, a space of functions from \mbox{$\R^d$} to \mbox{$\R$}), is the bilinear map 
$$(f,\phi) \to f\circ \phi$$
bounded from \mbox{$X \times (\LL/E(d))$} to \mbox{$X$}? By boundedness, we mean that we have the estimate
$$ \|f\circ \phi\|_{X} \lesssim  \log(K(\phi)) \|f\|_{X}.$$

We shall answer this question when \mbox{$\phi \in \LL$} is measure preserving and \mbox{$X$} is the space of functions of bounded mean oscillation BMO (Section \ref{sec:bmolip} provides the precise definition of this space). We also consider the space of Lipschitz functions (defined, again, in Section \ref{sec:bmolip}) and the space of Carleson measures (Section \ref{sec:carl}) and their dual spaces.

For a measure preserving \mbox{$\phi \in \LL$}, the bi-Lipschitz property property implies that for a ball \mbox{$B$}, \mbox{$\phi(B)$} can be contained in a ball of radius \mbox{$K_\phi r_{B}$}, where \mbox{$r_{B}$} is the radius of \mbox{$B$}. As a consequence, \mbox{$\phi(B)$} can be covered by a collection of \mbox{$K_\phi^d$} balls, with same radius as \mbox{$B$}. This observation easily yields than for a $\text{BMO}_p$ function (the BMO space with \mbox{$L^p$} oscillation) \mbox{$f$} on \mbox{$\R^d$}
$$
 \|f \circ \phi\|_{\text{BMO}_p} \leq K_\phi^{d/p} \|f\|_{\text{BMO}_p}
$$ 
and since, by the John-Nirenberg property, \mbox{$\text{BMO}_p$} coincides with BMO, by varying \mbox{$p\in(1,\infty)$}, if follows that for all \mbox{$\epsilon>0$} we have
\begin{equation} \|f \circ \phi\|_{\text{BMO}} \lesssim_\epsilon K_\phi^{\epsilon} \|f\|_{\text{BMO}}. \label{eq:BMO} \end{equation}
The interesting point of this work is to prove that the dependency on the constant \mbox{$K_\phi$} can be improved, under the measure preservation of \mbox{$\phi$}. In fact, the optimal dependency on \mbox{$K_\phi$} is logarithmic.

Similarly, it follows that for a Carleson measure \mbox{$\mu$} on \mbox{$\R^d$}, the pull-back measure
$$ \mu^ {\sharp \phi} = \mu( Id \otimes \phi^ {-1})$$
is a Carleson measure with
\begin{equation} \| \mu^ {\sharp \phi}\|_{\mathcal C} \lesssim K_\phi^d  \| \mu \|_{\mathcal C}. \label{eq:measure} \end{equation}
This can also be improved to a logarithmic dependency on \mbox{$K_\phi$} in certain circumstances.

Our main results are summarized in the following theorem. 

\begin{Theo} \label{thm} Let us assume that \mbox{$\phi$} bi-Lipschitz function preserving the measure on \mbox{$\R^d$}, then
\begin{itemize}
\item[$\bullet$][\textsc{BMO functions}] there exists an implicit constant (independent of \mbox{$K_\phi$}) such that for every BMO function \mbox{$f$}
\begin{equation} \|f \circ \phi\|_{\text{BMO}} \lesssim \log(K_\phi) \|f\|_{\text{BMO}}. \label{eq:BMO-bis} \end{equation}
\item[$\bullet$][\textsc{H\"older functions}] there exists an implicit constant (independent of \mbox{$K_\phi$}) such that for every function \mbox{$f\in \textrm{Lip}_p(a)$}
\begin{equation} \|f \circ \phi\|_{\text{Lip}_p(a)} \lesssim K_\phi^a \|f\|_{\text{Lip}_p(a)}. \label{eq:holder} \end{equation}
\item[$\bullet$] [\textsc{Carleson measures}] there exist a class \mbox{${\mathcal SC}$} of Carleson measures and an implicit constant (independent of \mbox{$K_\phi$}) such that for every Carleson measure \mbox{$\mu\in {\mathcal SC}$}, \mbox{$\mu^{\sharp \phi}$} belongs to \mbox{${\mathcal SC}$} and 
\begin{equation} \| \mu^ {\sharp \phi}\|_{\mathcal C} \lesssim \log(K_\phi) \| \mu \|_{\mathcal C}. \label{eq:measure-bis} \end{equation}
\end{itemize}
Moreover, we will prove that the logarithmic growth is optimal for the estimates involving the BMO norms.
\end{Theo}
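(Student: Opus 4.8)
The plan is to prove the three bullets by parallel arguments resting on one observation: since $\phi$ preserves Lebesgue measure, for any ball $B$ of radius $r_B$ the bi-Lipschitz property places $\phi(B)$ inside a ball $\widetilde B$ of radius $K_\phi r_B$, yet $|\phi(B)|=|B|=K_\phi^{-d}|\widetilde B|$, so $\phi(B)$ occupies only a fraction $K_\phi^{-d}$ of $\widetilde B$; this scarcity is the source of the improvement. For \eqref{eq:BMO-bis}, normalize $\|f\|_{\text{BMO}}=1$, fix $B$, and use $\int_B(g\circ\phi)=\int_{\phi(B)}g$ (valid since $|\det D\phi|=1$ a.e.) with $g=|f-f_{\widetilde B}|$:
\[
\frac{1}{|B|}\int_B|f\circ\phi-f_{\widetilde B}|\,dx=\frac{1}{|B|}\int_0^\infty\bigl|\{y\in\phi(B):|f(y)-f_{\widetilde B}|>\lambda\}\bigr|\,d\lambda .
\]
I would bound the distribution function by the minimum of the trivial bound $|\phi(B)|=|B|$ and the John--Nirenberg bound $|\{y\in\widetilde B:|f-f_{\widetilde B}|>\lambda\}|\le C_1K_\phi^d|B|\,e^{-c_1\lambda}$ (dimensional constants), and then split the $\lambda$-integral at the crossover $\lambda_0=c_1^{-1}\log(C_1K_\phi^d)\lesssim\log K_\phi$. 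This gives $\frac{1}{|B|}\int_B|f\circ\phi-f_{\widetilde B}|\lesssim\log K_\phi$, hence \eqref{eq:BMO-bis} after taking the supremum over $B$. The point is that measure preservation lets us truncate the exponential John--Nirenberg tail of $f$ on $\widetilde B$ at height $\sim\log K_\phi$, which is strictly better than covering $\phi(B)$ by $K_\phi^d$ balls as in \eqref{eq:BMO}.

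For \eqref{eq:holder} I would argue the same way through the Campanato characterization of $\text{Lip}_p(a)$: since $a>0$, the oscillation of $f$ on $\widetilde B$ is $\lesssim(K_\phi r_B)^a\|f\|_{\text{Lip}_p(a)}$, so $|f-f_{\widetilde B}|\lesssim K_\phi^a r_B^a\|f\|_{\text{Lip}_p(a)}$ pointwise on $\widetilde B\supseteq\phi(B)$; multiplying by $|\phi(B)|=|B|$ gives $\bigl(\tfrac{1}{|B|}\int_{\phi(B)}|f-f_{\widetilde B}|^p\bigr)^{1/p}\lesssim K_\phi^a r_B^a\|f\|_{\text{Lip}_p(a)}$, that is \eqref{eq:holder}; here measure preservation is exactly what removes the factor $K_\phi^{d/p}$ of the crude estimate (for $0<a<1$ one may instead argue pointwise via $|f(\phi x)-f(\phi y)|\le[f]_{C^a}K_\phi^a|x-y|^a$). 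For \eqref{eq:measure-bis}, the class $\mathcal{SC}$ is, by its definition in Section \ref{sec:carl}, the Carleson measures satisfying a John--Nirenberg-type exponential decay on Carleson boxes; this is precisely the hypothesis needed to rerun the truncation argument of the BMO case (with the $\mu$-mass of the Carleson box over $B$ in place of $|B|$, and the box over $\widetilde B$ in place of $\widetilde B$), which at once shows $\mu^{\sharp\phi}\in\mathcal{SC}$ and yields the logarithmic constant.

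For the optimality I would keep $f(x)=\log|x|$, which lies in BMO with $\|f\|_{\text{BMO}}\asymp 1$, and build, for each large $K$, a measure-preserving $\phi_K\in\LL$ with $K_{\phi_K}\asymp K$ such that for some ball $B$ of radius $r$ the image $S:=\phi_K(B)$ (of measure $|B|$) is spread uniformly across logarithmic scales, carrying mass $\asymp|B|/\log K$ in each of the $\asymp\log K$ dyadic annuli $\{|y|\sim 2^j r\}$, $0\le j\lesssim\log_2 K$. Then, by the same change of variables,
\[
\frac{1}{|B|}\int_B|f\circ\phi_K-c|\,dx=\frac{1}{|S|}\int_S\bigl|\log|y|-c\bigr|\,dy ,
\]
and because $\log|y|$ is (up to $O(1)$) equidistributed over an interval of length $\asymp\log K$ when $y$ runs over $S$, the infimum over $c$ of this quantity is $\gtrsim\log K$. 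Hence $\|f\circ\phi_K\|_{\text{BMO}}\gtrsim\log K\asymp\log(K_{\phi_K})\|f\|_{\text{BMO}}$, which with \eqref{eq:BMO-bis} gives sharpness; the same family yields optimality of \eqref{eq:measure-bis} via the duality between BMO and Carleson measures.

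The main obstacle is this last construction: realizing $S$ as the image of a ball under a bi-Lipschitz, measure-preserving homeomorphism of $\R^d$ with constant $\asymp K$. One must join the pieces of $S$ in consecutive annuli by channels thin enough ($\asymp r\,K^{-1/(d-1)}$, so that transporting mass from scale $r$ to scale $Kr$ only costs bi-Lipschitz constant $\asymp K$ in dimension $d\ge 2$) that they do not consume more than $O(|B|)$ of volume, and then extend the map to all of $\R^d$ while preserving both bi-Lipschitz-ness and measure; this geometric bookkeeping --- not the (rather soft) upper bounds --- is where the real work lies.
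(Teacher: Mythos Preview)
Your arguments for \eqref{eq:BMO-bis} and \eqref{eq:holder} are correct but follow a genuinely different route from the paper. The paper never uses John--Nirenberg to truncate a tail; instead it proves a single geometric lemma (a Whitney covering $(O_k)_k$ of $\phi(B)$ satisfying $\bigl(|B|^{-1}\sum_k|O_k|\,\rho_a(r_B/r_{O_k})^p\bigr)^{1/p}\lesssim\rho_a(K_\phi)$, where $\rho_0=\log$ and $\rho_a(r)=r^a$), and then runs a standard telescoping-of-averages argument uniformly in $a\in[0,1)$ and $p\in[1,\infty)$. Your level-set truncation at $\lambda_0\asymp\log K_\phi$ is slicker for BMO and makes the role of measure preservation very transparent; the paper's Whitney approach is a bit heavier but handles BMO, $\textrm{Lip}_p(a)$, and Carleson measures by one mechanism.

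There is a genuine gap in your treatment of \eqref{eq:measure-bis}: you have misidentified the class $\mathcal{SC}$. In the paper $\mathcal{SC}$ is \emph{not} defined by a John--Nirenberg-type decay; it is the class of measures $d\mu=|\beta(t,x)|^2\,\tfrac{dt\,dx}{t}$ with $\|\beta\|_{L^\infty}\lesssim\|\mu\|_{\mathcal C}$. The proof uses the same Whitney balls $(O_k)_k$ to split
\[
\int_{0}^{r_B}\!\!\int_{\phi(B)}|\beta|^2\,\frac{dx\,dt}{t}
\ \le\ \sum_k\Bigl(\int_{0}^{r_{O_k}}\!\!\int_{2O_k}+\int_{r_{O_k}}^{r_B}\!\!\int_{2O_k}\Bigr)|\beta|^2\,\frac{dx\,dt}{t},
\]
bounding the first piece by $\|\mu\|_{\mathcal C}\sum_k|O_k|\lesssim\|\mu\|_{\mathcal C}|B|$ and the second by $\|\beta\|_\infty^2\sum_k|O_k|\log(r_B/r_{O_k})\lesssim\|\beta\|_\infty^2\log(K_\phi)|B|$. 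Your proposed ``rerun of the truncation argument'' does not apply to this class as stated, because there is no level-set parameter to truncate; the $L^\infty$ bound on $\beta$ is what replaces your $|\phi(B)|=|B|$ cap, and it is exploited only after the Whitney split in the $t$-variable.

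Your optimality argument is also a different (and much harder) path than the paper's. The paper does \emph{no} explicit construction: it invokes Reimann's theorem that if $\|f\circ\phi\|_{\text{BMO}}\le k\|f\|_{\text{BMO}}$ for all $f$ then $\phi$ is $K$-quasiconformal with $K=e^{(d-1)(Ck-1)}$; combined with measure preservation (so $\|D\phi\|_\infty^d\le K$) and a short lemma that $\log K_\phi\simeq\log\|D\phi\|_\infty$, this immediately gives $\log K_\phi\lesssim k$. Your approach via $f=\log|x|$ and a hand-built $\phi_K$ spreading $\phi_K(B)$ across $\asymp\log K$ dyadic annuli is plausible, but, as you yourself note, the measure-preserving bi-Lipschitz extension with controlled constant is where all the difficulty sits, and that part is not carried out.
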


As a corollary and using \mbox{$H^1-\textrm{BMO}$} duality, we have

\begin{Coro} Let us assume that \mbox{$\phi$} is a bi-Lipschitz function preserving the measure on \mbox{$\R^d$}, then there exists an implicit constant (independent of \mbox{$K_\phi$}) such that for every function belonging to the Hardy space \mbox{$f\in H^1$}
\begin{equation*} \|f \circ \phi\|_{H^1} \lesssim \log(K_\phi) \|f\|_{H^1}. \end{equation*}
That means in some sense that the image of an atom by the composition \mbox{$a \circ \phi$} can be split into the sum of \mbox{$\log(K_\phi)$} atoms.
\end{Coro}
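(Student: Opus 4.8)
The plan is to derive the $H^1$ estimate from the $\text{BMO}$ estimate \eqref{eq:BMO-bis} by duality, using the measure-preservation of $\phi$ to move the composition off of $f$ and onto the test function.

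By density it suffices to treat $f$ equal to a finite linear combination of atoms. For such an $f$ the function $f\circ\phi$ again lies in $H^1$: since $\phi$ preserves the Lebesgue measure and sends a ball $B$ into a ball of radius at most $K_\phi r_B$, the image $a\circ\phi$ of an atom $a$ has vanishing integral, the same $L^\infty$ bound, and support in a ball of radius $K_\phi r_B$, hence is $K_\phi^d$ times an atom; thus $f\circ\phi\in H^1$, at this stage only with the crude bound $\|f\circ\phi\|_{H^1}\lesssim K_\phi^{d}\|f\|_{H^1}$, which is all that is needed a priori. Moreover, for every $g\in\text{BMO}$ the functions $(f\circ\phi)\,g$ and $f\,(g\circ\phi^{-1})$ are integrable (a bounded compactly supported function times an $L^1_{\rm loc}$ one), and the change of variables $x=\phi^{-1}(y)$, legitimate because $\phi$ is bi-Lipschitz and measure preserving (so it introduces no Jacobian factor), gives
\[
\int_{\R^d}(f\circ\phi)\,g=\int_{\R^d}f\,(g\circ\phi^{-1}).
\]

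Now invoke Fefferman's duality $(H^1)^*=\text{BMO}$ in the form $\|F\|_{H^1}\approx\sup\{\,|\langle F,g\rangle|:\ \|g\|_{\text{BMO}}\le1\,\}$, valid for every $F\in H^1$, the pairing reducing to $\int Fg$ when $F$ is a finite combination of atoms. Applied to $F=f\circ\phi$ and to $F=f$, together with the identity above, for every $g$ with $\|g\|_{\text{BMO}}\le1$ we get
\[
\Big|\int_{\R^d}(f\circ\phi)\,g\Big|=\Big|\int_{\R^d}f\,(g\circ\phi^{-1})\Big|\le\|f\|_{H^1}\,\|g\circ\phi^{-1}\|_{\text{BMO}}.
\]
The map $\phi^{-1}$ is itself bi-Lipschitz and measure preserving, and substituting $u=\phi(x)$, $v=\phi(y)$ in the definition of $K$ gives $K_{\phi^{-1}}=K_\phi$; hence \eqref{eq:BMO-bis} applied to $\phi^{-1}$ yields $\|g\circ\phi^{-1}\|_{\text{BMO}}\lesssim\log(K_{\phi^{-1}})\,\|g\|_{\text{BMO}}=\log(K_\phi)\,\|g\|_{\text{BMO}}\le\log(K_\phi)$. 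Taking the supremum over such $g$ gives $\|f\circ\phi\|_{H^1}\lesssim\log(K_\phi)\,\|f\|_{H^1}$ for $f$ in the dense class, whence for all $f\in H^1$.

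The argument is short, and the main obstacle is purely the functional-analytic bookkeeping: one must check that the $(H^1,\text{BMO})$ pairing really is the integral for the functions involved, that the change of variables and the supremum characterization of the $H^1$ norm are applied only to legitimate objects, and that the crude a priori bound indeed suffices to run the density reduction. Finally, the informal statement about atoms is just a rereading of the inequality proved: although $a\circ\phi$ is a priori only $K_\phi^d$ times an atom, duality forces $\|a\circ\phi\|_{H^1}\lesssim\log(K_\phi)$, i.e.\ $a\circ\phi$ admits an atomic decomposition whose $\ell^1$ coefficient sum is of order $\log(K_\phi)$.
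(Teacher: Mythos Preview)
Your argument is correct and is precisely the duality route the paper indicates: the paper does not spell out a proof but simply prefaces the corollary with ``using $H^1$--$\textrm{BMO}$ duality,'' and what you have written is the careful execution of that sentence (change of variables via measure preservation, then \eqref{eq:BMO-bis} applied to $\phi^{-1}$, noting $K_{\phi^{-1}}=K_\phi$). Nothing to add.
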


One of the main motivation is the study of transport PDEs, associated to a free-divergence vector field. Indeed, such a vector field gives rise to a bi-Lipschitz measure preserving flow, which plays a crucial role for solving the transport equation. We also describe some consequences (we obtain an improved growth of the solution) in the last section for such PDEs. We point out that such study have already been done for Besov spaces, see \cite[Thm 4.2]{V} where Vishik obtained a logarithmic growth (as our result for BMO space) for the Besov space \mbox{$B^0_{\infty,1}$} with applications to Euler equation. More recently, the authors have used similar ideas in \cite{BK} to get well-posedness results for Euler equation, with a vorticity belonging to a space strictly imbricated between \mbox{$L^\infty$} and BMO. In these two results, the spaces are of completely different nature but the same idea is to understand and to have sharp inequalities for the composition (by a measure-preserving map) in these spaces.

\section{A geometric Lemma}

Before proving Theorem \ref{thm}, we would like to point out the key argument: a geometric lemma, which describes how a ball is modified by a measure-preserving map.

For each fixed \mbox{$a \in [0,1]$}, define \mbox{$\rho_a \colon [0,\infty) \to \R$} as
\begin{equation} \label{fnc}
\rho_a(r) =  \begin{cases}
   r^a,        & \text{if \mbox{$a > 0$};} \\
   \log(r),        & \text{if \mbox{$a=0$}.}
  \end{cases}
\end{equation}

\begin{Lemm} \label{lemma} For every ball \mbox{$B=B(x_0,r)$} in \mbox{$\R^d$}, there exists a collection \mbox{$(O_k)_k$} of balls such that
\begin{itemize}
 \item The collection \mbox{$(2O_k)_k$} is a bounded covering of \mbox{$\phi(B)$}
 \item The collection \mbox{$(O_k)_k$} is disjoint
 \item By writing \mbox{$r_{O_k}$} the radius of \mbox{$O_k$}, then for all \mbox{$p\in [1,\infty)$}
  \begin{equation} \left(\frac{1}{|B|} \sum_k |O_k| \rho_a(r_B/r_{O_k})^p \right)^{1/p}  \lesssim \rho_a\left(K_\phi\right), \label{eq:covering} \end{equation}
  with an implicit constant dependent only on the dimension \mbox{$n$}, \mbox{$a$} and \mbox{$p$}. 
\end{itemize}
\end{Lemm}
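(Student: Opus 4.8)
The plan is to build the collection $(O_k)_k$ by a Whitney-type decomposition of $\phi(B)$, but tuned to the measure-preserving constraint. First I would fix the ball $B = B(x_0,r)$ and consider the image set $\phi(B)$, which by bi-Lipschitzness satisfies $B(\phi(x_0), r/K_\phi) \subset \phi(B) \subset B(\phi(x_0), K_\phi r)$ and has measure exactly $|B|$. The idea is to cover $\phi(B)$ by dyadic cubes (or balls) $O_k$ whose radius $r_{O_k}$ is, up to a fixed dimensional constant, comparable to the distance from $O_k$ to the complement of $\phi(B)$; such a family exists by the standard Whitney covering lemma, it is disjoint (or boundedly overlapping, which one can thin out), and the dilates $2O_k$ cover $\phi(B)$. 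The radii then range over a geometric scale from roughly $r/K_\phi$ up to $K_\phi r$, so $\rho_a(r_B/r_{O_k})$ is controlled — the point is that it can be as large as $\rho_a(K_\phi)$ only on a small portion of $\phi(B)$.

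The key quantitative step is to organize the $O_k$ by their radius: for $j \in \mathbb{Z}$ let $\mathcal{F}_j = \{k : r_{O_k} \sim 2^{-j} r\}$ and set $V_j = \sum_{k \in \mathcal{F}_j} |O_k|$, so that $\sum_j V_j = |\phi(B)| = |B|$ by disjointness and the covering property (up to the bounded-overlap constant). Then the left side of \eqref{eq:covering} is, up to constants,
\begin{equation*}
\left(\frac{1}{|B|}\sum_{j} V_j\, \rho_a(2^j)^p\right)^{1/p},
\end{equation*}
and since $r_{O_k} \gtrsim r/K_\phi$ forces $2^j \lesssim K_\phi$, i.e. $j \lesssim \log_2 K_\phi$, we have $\rho_a(2^j)^p \lesssim \rho_a(K_\phi)^p$ uniformly over the relevant range of $j$. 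Hence the whole sum is bounded by $\rho_a(K_\phi)^p \cdot \frac{1}{|B|}\sum_j V_j = \rho_a(K_\phi)^p$, giving \eqref{eq:covering}. For the negative $j$ (cubes larger than $r$), one uses $\rho_a$ is increasing and bounded by $\rho_a(1) = 0$ (for $a=0$) or $\lesssim 1$ (for $a>0$) there, so those terms contribute a harmless amount; in fact for $a=0$ one should be slightly careful and replace $\log(r_B/r_{O_k})$ by its positive part or note that $|\log|$ is what is meant, but the estimate only improves.

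The main obstacle, and the place where measure preservation is genuinely used, is controlling $\sum_j V_j\,\rho_a(2^j)^p$ \emph{without} the crude bound $\rho_a(2^j) \le \rho_a(K_\phi)$ for the deepest scales — that crude bound alone would only re-prove \eqref{eq:BMO}, not the logarithmic improvement. The real gain must come from the fact that $\sum_j V_j = |B|$ is \emph{fixed}, so the deep Whitney cubes where $j$ is comparable to $\log K_\phi$ can only carry a small total measure: heuristically, the "bad'' region near $\partial \phi(B)$ at depth $2^{-j}r$ has measure that does not blow up as $K_\phi \to \infty$. Thus the honest argument balances the measure constraint $\sum_j V_j \le |B|$ against the worst-case size $\rho_a(2^j)^p \lesssim \rho_a(K_\phi)^p$; the supremum of $\frac{1}{|B|}\sum_j V_j \rho_a(2^j)^p$ subject to $\sum_j V_j \le |B|$ and $2^j \lesssim K_\phi$ is exactly $\rho_a(K_\phi)^p$, which is what we want. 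I would also need to verify the bounded-overlap constant of $(2O_k)_k$ is purely dimensional (standard for Whitney families) and that passing from cubes to balls costs only a dimensional factor, both routine.
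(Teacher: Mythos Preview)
Your overall architecture---Whitney decomposition of $\phi(B)$, dyadic grouping $V_j=\sum_{r_{O_k}\sim 2^{-j}r}|O_k|$, and the constraint $\sum_j V_j\simeq |B|$---matches the paper's, but the quantitative step has a genuine gap. You assert that $r_{O_k}\gtrsim r/K_\phi$, equivalently $2^j\lesssim K_\phi$, and then bound $\rho_a(2^j)\le\rho_a(K_\phi)$ uniformly. That lower bound on the Whitney radii is \emph{false}: Whitney balls have radius comparable to $d(O_k,\phi(B)^c)$, and for points of $\phi(B)$ close to the boundary this distance can be arbitrarily small---nothing in the bi-Lipschitz constant prevents it. (What is true is $r_{O_k}\le r_B$, since $O_k\subset\phi(B)$ and $|\phi(B)|=|B|$; there is no corresponding lower bound.) So the range of $j$ is $[0,\infty)$, not $[0,\log_2 K_\phi]$, and your optimization argument in the last paragraph collapses because its constraint $2^j\lesssim K_\phi$ is unjustified.

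The missing idea, which the paper supplies, is a \emph{decay estimate on $V_j$} rather than a cutoff on $j$. By the Whitney property, each $O_k$ with $r_{O_k}\sim 2^{-j}r$ satisfies $d(O_k,\phi(B)^c)\sim 2^{-j}r$; pulling back by the $K_\phi$-Lipschitz map $\phi^{-1}$ gives $d(\phi^{-1}(O_k),B^c)\lesssim K_\phi 2^{-j}r$. Hence the disjoint sets $\phi^{-1}(O_k)$ all lie in the annulus $\{x\in B: d(x,\partial B)\lesssim K_\phi 2^{-j}r\}$, whose measure is $\lesssim K_\phi 2^{-j}|B|$. Since $\phi$ preserves measure, this yields
\[
V_j \lesssim K_\phi\, 2^{-j}\, |B|.
\]
Now split at $j_0\sim\log_2 K_\phi$: for $j\le j_0$ use $\rho_a(2^j)\le\rho_a(2^{j_0})$ together with $\sum_j V_j\lesssim|B|$; for $j>j_0$ use the decay $V_j\lesssim K_\phi 2^{-j}|B|$ and sum the geometric tail $\sum_{j>j_0}K_\phi 2^{-j}\rho_a(2^j)^p\lesssim \rho_a(2^{j_0})^p$. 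Both pieces give $\rho_a(K_\phi)^p$. This annulus estimate is exactly where measure preservation and bi-Lipschitzness combine, and it is what your proposal is missing.
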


\begin{proof}
Let us consider a Whitney covering of the open set \mbox{$\phi(B)$}: that is a collection of open balls \mbox{$(O_k)_k$} such that~:
 \begin{itemize}
 \item the collection of double balls is a bounded covering~:
 $$ \phi(B) \subset \cup_k 2 O_k$$
  \item the collection is disjoint and for all \mbox{$k$}, \mbox{$O_k\subset \phi(B)$}
  \item the Whitney property is verified:
  $$ r_{O_k} \simeq d(O_k , \phi(B) ^c).$$
  \end{itemize}
So it remains for us to check \eqref{eq:covering}. Indeed, this is a combinatorial argument. 
First, since \mbox{$\phi$} is measure preserving, it follows that \mbox{$|O_k|\leq |B|$} and so \mbox{$r_{O_k}\leq r_B$} for all \mbox{$k$}. 
For a nonnegative integer \mbox{$l\geq 0$}, we write
$$ u_l := \sum_{k,\ 2^{-l} r_B \leq r_{O_k}< 2^{-l+1} r_B } \  \ |O_k|.$$
 Since \mbox{$(2O_k)_k$} is a bounded covering of \mbox{$\phi(B)$} (and that the balls \mbox{$(O_k)$} are disjoint), we have
 \be{eq:1} \sum_{l} u_l  \simeq |\phi^{-1}(B)|=|B|.\ee
Moreover, we see that
 \begin{align*}
 (\ast):=\left(\frac{1}{|B|} \sum_k |O_k| \rho_a(r_B/r_{O_k})^p \right)^{1/p}  & \leq \left( \frac{1}{|B|} \left[\sum_{l\geq 0} \rho_a(2^l)^p u_l\right] \right)^{1/p}.
 \end{align*}
 However for all the balls \mbox{$O_k$} considered in the term \mbox{$u_l$}, we know that \mbox{$\phi(O_k)$} are disjoint and are contained in a domain a distant at most \mbox{$K_\phi 2^{-l} r_B$} from the boundary of \mbox{$B$} (because of the Whitney property and the Lipschitz regularity of \mbox{$\phi$}). We also deduce, since the measure is preserved, that
 $$ u_l \lesssim K_\phi 2^{-l} r_B^d = K_\phi 2^{-l}|B|.$$
Therefore
 $$ u_l \lesssim \min\{K_\phi 2^{-l}|B|,u_l\}.$$
We choose an integer \mbox{$k_0\geq 1$} such that \mbox{$K_\phi 2^{-k_0} \simeq 1$} and we compute the sum as follows:
 \begin{align*}
 (\ast) & \leq \left(\frac{1}{|B|} \left[\sum_{l\geq 0} \rho_a(2^l)^p \min\{K_\phi 2^{-l}|B|,u_l\} \right] \right)^{1/p}  \\
 & \leq \left( \frac{1}{|B|} \left[\sum_{l=0}^{k_0} \rho_a(2^l)^p u_l\right] \right)^{1/p} + \left(\frac{1}{|B|} \left[\sum_{l\geq k_0} K_\phi \rho_a(2^l)^p 2^{-l} |B| \right]\right)^{1/p} \\
 & \lesssim \rho_a(2^{k_0}) + (K_\phi 2^{-k_0})^{1/p} \rho_a(2^{k_0}) \lesssim \rho_a(2^{k_0}).
 \end{align*}
 We have used \eqref{eq:1} to estimate the first sum, and to estimate the second sum we have used \mbox{$k_0\geq 1$} with  
 $$\sum_{l\geq k_0} 2^{-l} l^p = 2^{-k_0} k_0^p \sum_{j\geq 0} 2^{-j} \left(1+k_0^{-1} j\right)^p \lesssim 2^{-k_0} k_0^p$$
in the case \mbox{$a=0$} and 
 $$\sum_{l\geq k_0} 2^{(ap-1)l} = 2^{(ap-1)k_0} \sum_{j\geq 0} 2^{(ap-1)j} \lesssim 2^{-k_0} 2^{k_0ap}$$
in the case \mbox{$a>0$}. This concludes the proof of \eqref{eq:covering} since \mbox{$\rho_a(2^{k_0}) \lesssim \rho_a(K_\phi)$} . 
\end{proof}

\section{The behavior of BMO functions and Carleson measures}

\subsection{BMO and Lipschitz functions} \label{sec:bmolip}

For \mbox{$p\in[1,\infty)$}, \mbox{$a\in[0,1]$} and a locally integrable function \mbox{$f$} set
\[
\|f\|^\sharp_{p,a} = \sup_{B} \left(\frac{1}{|B|^{1+ap/d}} \int_B \left|f(y) - \av_{B} (f)\right|^p dy\right)^{1/p}
\]
where the suprema are taken over all balls \mbox{$B \subset \R^d$}. As usual, we define \mbox{$\text{BMO}_p := \{f\in L^1_{\text{loc}} \, | \, \|f\|^\sharp_{p,0} < \infty\}$} and \mbox{$\text{Lip}_p(a) := \{f\in L^1_{\text{loc}} \, | \, \|f\|^\sharp_{p,a} < \infty\}$} for \mbox{$a \in (0,1]$}. The John-Nirenberg property shows us that all the spaces \mbox{$\text{BMO}_p$ ($1\leq p < \infty$)} coincide, so we write \mbox{$\text{BMO}_1 = \text{BMO}$ and $\|f\|^\sharp_{p,0} = \|f\|_{\text{BMO}}$}. The main result of this subsection is the following one:

\begin{Theo} \label{thmb}
Let \mbox{$p\in[1,\infty)$}, \mbox{$a\in[0,1)$} and \mbox{$\phi:\R^d \rightarrow \R^n$} be a bi-Lipschitz measure-preserving map. Then 
$$ \|f\circ\phi\|^\sharp_{p,a} \lesssim \rho_a(K_\phi)\|f\|^\sharp_{p,a}$$
where \mbox{$\rho_a$} is defined in \eqref{fnc}.
\end{Theo}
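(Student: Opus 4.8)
The plan is to fix an arbitrary ball $B=B(x_0,r)$ and bound $\inf_c\bigl(|B|^{-1-ap/d}\int_B|f\circ\phi-c|^p\bigr)^{1/p}$; this suffices because for every $p\ge1$ the average $\av_B(f\circ\phi)$ realizes that infimum up to a factor $2$ (triangle inequality together with Jensen's inequality). Since $\phi$ preserves the measure, the change of variables $z=\phi(y)$ turns $\int_B|f\circ\phi-c|^p\,dy$ into $\int_{\phi(B)}|f-c|^p\,dz$ and gives $|\phi(B)|=|B|$, so the whole estimate is transported onto the set $\phi(B)$. I would then feed $\phi(B)$ into Lemma~\ref{lemma}, obtaining disjoint balls $(O_k)_k$ with $O_k\subset\phi(B)$, $r_{O_k}\le r_B$, $\phi(B)\subset\bigcup_k2O_k$ and the crucial inequality \eqref{eq:covering}; in particular $\int_{\phi(B)}|f-c|^p\le\sum_k\int_{2O_k}|f-c|^p$ and $\sum_k|O_k|\le|\phi(B)|=|B|$.

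The decisive move is the choice of $c$: I would take $c=\av_{\tilde B}(f)$, where $\tilde B:=B(\phi(x_0),K_\phi r_B)$ is a ball containing $\phi(B)$. For each $k$, split
\[
\int_{2O_k}\bigl|f-\av_{\tilde B}(f)\bigr|^p\lesssim\int_{2O_k}\bigl|f-\av_{2O_k}(f)\bigr|^p+|2O_k|\,\bigl|\av_{2O_k}(f)-\av_{\tilde B}(f)\bigr|^p.
\]
The first term is $\le|2O_k|^{1+ap/d}(\|f\|^\sharp_{p,a})^p\lesssim|O_k|^{1+ap/d}(\|f\|^\sharp_{p,a})^p$ directly from the definition of the seminorm, and since $r_{O_k}\le r_B$ and $\sum_k|O_k|\le|B|$, after dividing by $|B|^{1+ap/d}$ it contributes only $\lesssim(\|f\|^\sharp_{p,a})^p$. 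For the second term I would run a standard telescoping along the concentric doubling balls $B(z_k,2^jr_{O_k})$, where $z_k$ is the center of $O_k$ and $j$ increases until the radius reaches $\simeq K_\phi r_B$, i.e.\ over $\simeq\log(K_\phi r_B/r_{O_k})$ steps, each of them costing $\simeq(2^jr_{O_k})^a\|f\|^\sharp_{p,a}$. This yields
\[
\bigl|\av_{2O_k}(f)-\av_{\tilde B}(f)\bigr|\lesssim r_B^{a}\bigl(\rho_a(K_\phi)+\rho_a(r_B/r_{O_k})\bigr)\|f\|^\sharp_{p,a},
\]
since for $a>0$ the geometric series is controlled by its last term $\simeq(K_\phi r_B)^a$, whereas for $a=0$ each step costs $\simeq\|f\|_{\text{BMO}}$, so the total is the number of steps $\simeq\rho_0(K_\phi)+\rho_0(r_B/r_{O_k})$ times $\|f\|_{\text{BMO}}$.

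It then remains to sum. Raising the last display to the $p$-th power, using $(\rho_a(K_\phi)+\rho_a(r_B/r_{O_k}))^p\lesssim\rho_a(K_\phi)^p+\rho_a(r_B/r_{O_k})^p$ and summing over $k$, I would get
\[
\sum_k|O_k|\,\bigl|\av_{2O_k}(f)-\av_{\tilde B}(f)\bigr|^p\lesssim r_B^{ap}\Bigl(\rho_a(K_\phi)^p\sum_k|O_k|+\sum_k|O_k|\rho_a(r_B/r_{O_k})^p\Bigr)(\|f\|^\sharp_{p,a})^p,
\]
where the first inner sum is $\le|B|\rho_a(K_\phi)^p$ trivially and the second is $\lesssim|B|\rho_a(K_\phi)^p$ by exactly \eqref{eq:covering}; dividing by $|B|^{1+ap/d}\simeq r_B^{ap}|B|$ turns this into $\lesssim\rho_a(K_\phi)^p(\|f\|^\sharp_{p,a})^p$. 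Adding the two contributions and using $1\lesssim\rho_a(K_\phi)$, one obtains $|B|^{-1-ap/d}\int_B|f\circ\phi-\av_{\tilde B}(f)|^p\lesssim\rho_a(K_\phi)^p(\|f\|^\sharp_{p,a})^p$; taking $p$-th roots and the supremum over all balls $B$ finishes the proof.

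The single delicate point, in my view, is the choice of the subtracted constant. Comparing the oscillation of $f\circ\phi$ over $B$ to $\av_{\phi(B)}(f)$ would be ruinous — it would cost a factor $K_\phi^{d}$, because $\phi(B)$ is not a ball of radius comparable to $r_B$ — while comparing to the average over the enclosing ball $\tilde B$ of radius $K_\phi r_B$ works, provided one then recognizes that the telescoping length $\log(K_\phi r_B/r_{O_k})$ splits into a ``$K_\phi$-part'', harmlessly absorbed by $\sum_k|O_k|\le|B|$, and an ``$r_B/r_{O_k}$-part'' absorbed precisely by the geometric Lemma~\ref{lemma}. Once this bookkeeping is set up the rest is routine; the case $a>0$ is in fact even simpler since the telescoping series is then geometric and dominated by its top term.
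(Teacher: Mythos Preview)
Your proof is correct and follows essentially the same strategy as the paper's: transport to $\phi(B)$ by the measure-preserving change of variables, cover $\phi(B)$ by the Whitney balls of Lemma~\ref{lemma}, compare against the average of $f$ over a reference ball $\tilde B$, telescope to bound $|\av_{2O_k}(f)-\av_{\tilde B}(f)|$ (this is the paper's Lemma~\ref{lem}), and sum using \eqref{eq:covering}. The only cosmetic difference is the choice of $\tilde B$: the paper takes $\tilde B=B(\phi(x_0),r)$ with the \emph{same} radius as $B$ and then passes through the dilation $2K_\phi\tilde B$, whereas you take $\tilde B$ of radius $K_\phi r_B$ from the start; both routes give the same telescoping length $\simeq\log(K_\phi r_B/r_{O_k})$ and are equally good. (One small side remark: your closing comment that comparing to $\av_{\phi(B)}(f)$ would be ``ruinous'' is not quite right --- since $\av_{\phi(B)}(f)=\av_B(f\circ\phi)$ by measure preservation, this is exactly the canonical constant, and the paper in fact uses it; what would be ruinous is trying to control the oscillation over $\phi(B)$ by a \emph{single} enclosing ball without the Whitney decomposition.)
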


\begin{rema} In the light of the John-Nirenberg inequality, it is not surprising that the required ``extra'' factor \mbox{$\rho_a(K_\phi)$} does not depend on the exponent \mbox{$p\in[1,\infty)$}.
 \end{rema}

\mb We will require the following well-known lemma (at least for \mbox{$BMO$}):

\begin{Lemm} \label{lem} Let \mbox{$f\in BMO_p$} and \mbox{$B$} be a ball in \mbox{$\R^d$} which contains \mbox{$x \in \R^d$}, then for all \mbox{$\lambda>1$}
$$ \left| \av_{B}(f) - \av_{\lambda B} (f) \right| \lesssim \rho_a(\lambda)|B|^{a/d} \|f\|^\sharp_{p,a}.$$
\end{Lemm}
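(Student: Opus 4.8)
The plan is the classical telescoping argument over dyadic dilates, which expresses the fact that averages of a $\text{BMO}_p$ (resp. $\text{Lip}_p(a)$) function cannot grow faster than $\rho_a$. Throughout, $\lambda B$ and $2^{j}B$ denote the concentric dilates of $B$ (if one prefers the dilates about the point $x\in B$, the argument is unchanged up to fixed constants, since $B$ and such a $\lambda B$ remain comparable balls). I would fix $\lambda>1$, set $N=\lceil\log_2\lambda\rceil$ so that $\lambda B\subset 2^{N}B$ with $2^{N}\le 2\lambda$, write $B_j=2^{j}B$, and decompose
\[
\av_B(f)-\av_{\lambda B}(f)=\sum_{j=0}^{N-1}\bigl(\av_{B_j}(f)-\av_{B_{j+1}}(f)\bigr)+\bigl(\av_{B_N}(f)-\av_{\lambda B}(f)\bigr),
\]
which reduces the claim to estimating a difference of averages over two balls, one contained in the other and of comparable measure.

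For such a pair $B'\subset B''$ with $|B''|\le C|B'|$, the key estimate is
\begin{align*}
\bigl|\av_{B'}(f)-\av_{B''}(f)\bigr|
&\le\frac{|B''|}{|B'|}\cdot\frac{1}{|B''|}\int_{B''}\bigl|f-\av_{B''}(f)\bigr| \\
&\le C\left(\frac{1}{|B''|}\int_{B''}\bigl|f-\av_{B''}(f)\bigr|^{p}\right)^{1/p}
\le C\,|B''|^{a/d}\,\|f\|^\sharp_{p,a},
\end{align*}
using $\av_{B'}(c)=c$, the triangle inequality and $B'\subset B''$ in the first step, Hölder (equivalently Jensen, since $p\ge 1$) in the second, and the very definition of $\|\cdot\|^\sharp_{p,a}$ in the last, after factoring $|B''|^{-1/p}=|B''|^{a/d}\,|B''|^{-(1+ap/d)/p}$. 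Applying this with $B'=B_j$, $B''=B_{j+1}$ (so $|B_{j+1}|^{a/d}\simeq 2^{ja}|B|^{a/d}$), and with $B'=\lambda B$, $B''=B_N$ for the tail term, each summand is $\lesssim 2^{ja}|B|^{a/d}\|f\|^\sharp_{p,a}$ when $a>0$ and $\lesssim\|f\|_{\text{BMO}}$ when $a=0$.

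It then remains to sum. If $a>0$, $\sum_{j=0}^{N-1}2^{ja}\lesssim 2^{Na}\simeq\lambda^{a}$ and the tail term is likewise $\lesssim\lambda^{a}|B|^{a/d}\|f\|^\sharp_{p,a}$, so the total is $\lesssim\lambda^{a}|B|^{a/d}\|f\|^\sharp_{p,a}=\rho_a(\lambda)|B|^{a/d}\|f\|^\sharp_{p,a}$; if $a=0$, there are $N\simeq\log_2\lambda$ summands, each $\lesssim\|f\|_{\text{BMO}}$, giving $\lesssim\log(\lambda)\|f\|_{\text{BMO}}=\rho_0(\lambda)|B|^{0}\|f\|^\sharp_{p,0}$. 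This is the asserted bound. I expect no genuine obstacle here: the only points needing a little care are keeping the normalising factor $|B''|^{a/d}$ attached to the correct ball along the chain, handling the non-dyadic tail $(\lambda B,2^{N}B)$, and noting — consistently with the remark following Theorem~\ref{thmb} — that $p$ enters only through the harmless Hölder step, so that the implicit constant depends on $d$ and $a$ but not on $p$.
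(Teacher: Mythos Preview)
Your proof is correct and follows essentially the same approach as the paper: telescoping along the dyadic chain $B\subset 2B\subset\cdots\subset 2^{N}B$, bounding each step via the doubling property, H\"older, and the definition of $\|\cdot\|^\sharp_{p,a}$, then summing the geometric (resp.\ constant) series when $a>0$ (resp.\ $a=0$). The only cosmetic difference is the treatment of the non-dyadic tail --- you overshoot to $2^{N}B\supset\lambda B$ whereas the paper undershoots to $2^{k_0}B\subset\lambda B$ --- which is immaterial.
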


\mb For readibility and the sake of completeness, we will provide the proof.

\begin{proof} First we remark that, using the doubling property of the Euclidean measure,
\begin{align*}
 \left|\av_{B}(f) - \av_{2B} (f) \right| & \leq \av_{B} |f-\av_{2B}(f)| \lesssim \av_{2B} |f-\av_{2B}(f)| \\ 
 & \lesssim \left(\av_{2B} |f-\av_{2B}(f)|^p \right)^{1/p}. 
\end{align*}
So it follows that 
$$ \left| \av_{B}(f) - \av_{2B} (f) \right| \lesssim 2^a|B|^{a/n}\|f\|^\sharp_{p,a},$$
which corresponds to the desired result for \mbox{$\lambda=2$}.

We iterate this argument \mbox{$k_0$} times, where \mbox{$k_0$} is such that \mbox{$2^{k_0} \leq \lambda <2^{k_0+1}$}, and obtain
\begin{align*}
\left| \av_{B} (f) - \av_{2^{k_0} B} (f) \right| & \leq \sum_{k=0}^{k_0-1} \left| \av_{2^k B}(f) - \av_{2^{k+1} B} (f) \right| \\
 & \lesssim \sum_{k=0}^{k_0-1} 2^{a(1+k)}|B|^{a/d}\|f\|^\sharp_{p,a} \lesssim \rho(2^{k_0})|B|^{a/d} \|f\|^\sharp_{p,a}.
\end{align*}
To conclude, it remains for us to estimate the following term:
\begin{align*}
 \left| \av_{2^{k_0} B} (f) - \av_{\lambda B} (f) \right| & \lesssim \av_{2^{k_0} B} \left|f- \av_{\lambda B} (f) \right| \\
 & \lesssim \av_{\lambda B} \left|f- \av_{\lambda B} (f) \right| \\
 & \lesssim \lambda^a|B|^{a/d}\|f\|^\sharp_{p,a} \lesssim \rho_a(\lambda)|B|^{a/d}\|f\|^\sharp_{p,a},
\end{align*}
where we have one more time used the doubling property.
\end{proof}
\mb We can now prove Theorem \ref{thmb}.

\begin{proof}[Proof of Theorem \ref{thmb}]
Fix \mbox{$x \in \R^d$} and let \mbox{$B$} be a ball in \mbox{$\R^d$} containing \mbox{$x$}. We wish to estimate
$$ I:=\left(\av_{B} \left| f \circ \phi - \av_{B} (f\circ \phi) \right|^p \right)^{1/p}$$
and with this in mind, using the measure-preserving property of \mbox{$\phi$}, we see that
\begin{align*}
I = \left(\frac{1}{|B|}\int_{\phi(B)}\left| f(z)- \av_{\phi(B)}(f) \right|^p dz\right)^{1/p}. 
\end{align*}
 Now we would like to compare \mbox{$\phi(B)$} with the ball
 $$ \tilde{B}:= B(\phi(x),r).$$
 We claim that we have the following inequality
 \be{amontrer}
 II:=\left(\av_{\phi(B)} \left| f - \av_{\tilde{B}} (f)\right|^p \right)^{1/p} \lesssim \rho_a(K_\phi) |B|^{a/d} \|f\|^\sharp_{p,a}.
 \ee
Let us first deduce the theorem from \eqref{amontrer}. \\
We have
 $$ \left| \av_{\phi(B)} (f) - \av_{\tilde{B}}(f) \right| \lesssim \rho_a(K_\phi) |B|^{a/d} \|f\|^\sharp_{p,a}, $$
hence
$$ I \lesssim \rho_a(K_\phi) |B|^{a/d} \|f\|^\sharp_{p,a},$$
where we have used \eqref{amontrer} a second time and so we deduce the desired result. \\
It remains for us to prove \eqref{amontrer}. To achieve this we use the collection \mbox{$(O_k)_k$} given by Lemma \ref{lemma}:
\begin{itemize}
 \item The collection \mbox{$(2O_k)_k$} is a bounded covering of \mbox{$\phi(B)$}
 \item The collection \mbox{$(O_k)_k$} is disjoint
 \item By writing \mbox{$r_{O_k}$} the radius of \mbox{$O_k$}, then for all \mbox{$p\in [1,\infty)$}
  \begin{equation} \left(\frac{1}{|B|} \sum_k |O_k| \rho_a(r_B/r_{O_k})^p \right)^{1/p}  \lesssim \rho_a\left(K_\phi\right). \label{eq:covering2} \end{equation}
  \end{itemize}
So we have
  $$ 	II = \left(\frac{1}{|B|} \sum_k \int_{2O_k} \left| f - \av_{\tilde{B}} (f)\right|^p \right)^{1/p}.$$
Let us first remark that since \mbox{$\phi$} is measure preserving then \mbox{$|O_k|\leq |\phi(B)|=|B|$} so \mbox{$r_{O_k}\leq r_B$}.
As a consequence, we see that \mbox{$2O_k \subset 2K_\phi \tilde{B}$} and so 
  \begin{align*}
  \left| \av_{2O_k} (f)  - \av_{\tilde{B}} (f)  \right| & \leq \left| \av_{2O_k} (f) - \av_{2K_\phi\tilde{B}} (f)  \right|+ \left| \av_{2K_\phi\tilde{B}} (f) - \av_{\tilde{B}} (f)  \right|  \\
  & \lesssim \left| \av_{2O_k} (f) - \av_{2K_\phi\tilde{B}} (f)  \right| + \rho_a(2K_\phi)|B|^{a/d} \|f\|^\sharp_{p,a} \\
  & \lesssim \left| \av_{2O_k} \left(f - \av_{2K_\phi\tilde{B}} (f)\right) \right| + \rho_a(2K_\phi)|B|^{a/d} \|f\|^\sharp_{p,a} \\
  & \lesssim \left| \av_{B(x_{O_k},4K_\phi r_B)} \left(f - \av_{2O_k} (f)\right) \right| + \rho_a(2K_\phi)|B|^{a/d} \|f\|^\sharp_{p,a} \\
  & \lesssim \left| \av_{B(x_{O_k},4K_\phi r_B)} f - \av_{2O_k} (f)\right| + \rho_a(2K_\phi)|B|^{a/d} \|f\|^\sharp_{p,a} \\
  & \lesssim \left[\rho_a(2K_\phi r_B/r_{O_k})|O_k|^{a/d} + \rho_a(2K_\phi)|B|^{a/d}\right]\|f\|^\sharp_{p,a} \\
  & \lesssim \left[\rho_a(r_B/r_{O_k}) + \rho_a(K_\phi)\right]|B|^{a/d}\|f\|^\sharp_{p,a},
  \end{align*}
 where we have used Lemma \ref{lem} once and then a second time with the fact that \mbox{$B(x_{O_k},4K_\phi r_B)$} is a dilation of \mbox{$2O_k$} by a factor \mbox{$2K_\phi r_B/r_{O_k}$}. Consequently (using that \mbox{$(2O_k)_k$} is a bounded covering of \mbox{$\phi(B)$} and that the measure is preserved), we deduce  that 
  \begin{align*}
   II &\lesssim \left(\frac{1}{|B|} \sum_k \left[ \int_{2O_k} \left| f - \av_{2O_k} (f)\right|^p + |O_k| \left(\left[\rho_a(r_B/r_{O_k}) + \rho_a(K_\phi)\right]|B|^{a/d}\|f\|^\sharp_{p,a} \right)^p \right]\right)^{1/p}  \\
 & \lesssim   \left(\frac{1}{|B|} \sum_k |O_k| \big(\rho_a(r_B/r_{O_k}) + \rho_a(K_\phi)\big)^p\right)^{1/p} |B|^{a/d}\|f\|^\sharp_{p,a}   \\
 & \lesssim  \rho_a(K_\phi)|B|^{a/n}\|f\|^\sharp_{p,a} + \left(|B|^{-1} \left[\sum_{k} |O_k| \rho_a(r_B/r_{0_k})^p \right]\right)^{1/p} |B|^{a/d}\|f\|^\sharp_{p,a},
 \end{align*}
where we have used that \mbox{$\sum_k |O_k| \simeq |B|$}. The proof is also achieved by invoking property \eqref{eq:covering2} to compute the remaining sum and estimate it by the expected quantity \mbox{$\rho_a(K_\phi) |B|^{a/d}\|f\|^\sharp_{p,a}$}.
\end{proof}

\subsection{Optimality of the logarithmic growth} We begin by observing that a measure preserving function that is Lipschitz is, in fact, bi-Lipschitz.

\begin{Lemm} \label{lem:Kphi} If \mbox{$\phi\in \LL$} preserves the Lebesgue measure then
$$ \log(K_\phi) \simeq \log\left(\sup_{x,y} \frac{|\phi(x)-\phi(y)|}{|x-y|}\right) \simeq \log\left(\sup_{x,y} \frac{|x-y|}{|\phi(x)-\phi(y)|}\right).$$
 \end{Lemm}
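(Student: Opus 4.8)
The plan is to reduce everything to a single polynomial comparison of Lipschitz constants. Write $L:=\Lip(\phi)=\sup_{x\neq y}\frac{|\phi(x)-\phi(y)|}{|x-y|}$ and $L':=\Lip(\phi^{-1})=\sup_{x\neq y}\frac{|x-y|}{|\phi(x)-\phi(y)|}$, so that $K_\phi=L+L'$ and hence $\max(L,L')\le K_\phi\le 2\max(L,L')$. Thus $\log K_\phi$, $\log L$ and $\log L'$ are all within $O(1)$ of $\log\max(L,L')$, and the whole statement follows once we know that $\log L\simeq\log L'$, i.e.\ that the Lipschitz constants of $\phi$ and $\phi^{-1}$ are polynomially related. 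First I would record the harmless normalisation $L,L'\ge 1$: for any ball $B=B(x,r)$ the inclusion $\phi(B)\subset B(\phi(x),Lr)$ and the measure-preservation of $\phi$ give $|B|=|\phi(B)|\le L^{d}|B|$, so $L\ge1$, and symmetrically $L'\ge1$.

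The heart of the proof is the estimate $L'\le L^{d-1}$; the reverse estimate $L\le (L')^{d-1}$ then follows by applying the same statement to $\phi^{-1}$, which is again a measure-preserving bi-Lipschitz homeomorphism. I would obtain it by differentiation. By Rademacher's theorem $\phi$ is differentiable a.e.; by the area formula for Lipschitz maps combined with the bijectivity and measure-preservation of $\phi$, one has $\int_E|\det D\phi|=|\phi(E)|=|E|$ for every measurable $E$, whence $|\det D\phi(x)|=1$ for a.e.\ $x$. Writing $\sigma_1(x)\ge\dots\ge\sigma_d(x)>0$ for the singular values of $D\phi(x)$, we have $\sigma_1(x)=\|D\phi(x)\|\le L$ and $\prod_i\sigma_i(x)=1$, so
$$\sigma_d(x)=\Big(\prod_{i<d}\sigma_i(x)\Big)^{-1}\ge \sigma_1(x)^{-(d-1)}\ge L^{-(d-1)}\qquad\text{and hence}\qquad \|D\phi(x)^{-1}\|\le L^{d-1}$$
for a.e.\ $x$. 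Applying Rademacher to $\phi^{-1}$ and discarding the additional null set $\phi(\{x:\ \phi\text{ not differentiable at }x\ \text{or}\ |\det D\phi(x)|\neq 1\})$ (which is null because $\phi$ is Lipschitz), the chain rule applied to $\phi^{-1}\circ\phi=\mathrm{id}$ yields $D(\phi^{-1})(p)=D\phi(\phi^{-1}(p))^{-1}$ for a.e.\ $p$, so $\|D(\phi^{-1})\|_{L^\infty}\le L^{d-1}$. Since for a Lipschitz map on $\R^d$ the Lipschitz constant equals the essential supremum of the operator norm of its differential (seen, e.g., by mollifying), this gives $L'=\Lip(\phi^{-1})\le L^{d-1}$.

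Assembling the pieces, I would get $\tfrac{1}{d-1}\log L'\le\log L\le(d-1)\log L'$ together with $\log\max(L,L')\le\log K_\phi\le\log 2+\log\max(L,L')$, which gives the three claimed equivalences, with implicit constants depending only on the dimension $d$.

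The only genuinely delicate point — everything else being elementary bookkeeping — is the passage from the pointwise a.e.\ bound $\|D(\phi^{-1})\|\le L^{d-1}$ to the true Lipschitz bound $\Lip(\phi^{-1})\le L^{d-1}$: concretely, the two standard but not entirely trivial facts that for a bi-Lipschitz bijection $\phi$ the derivatives of $\phi$ and of $\phi^{-1}$ are mutually inverse on a set of full measure, and that the Lipschitz constant of a Lipschitz map coincides with $\|D(\cdot)\|_{L^\infty}$. Both are handled via Rademacher's theorem, the area formula and mollification, but they are the steps where some care is required.
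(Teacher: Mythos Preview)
Your argument is correct and follows essentially the same route as the paper's proof: Rademacher gives a.e.\ differentiability, measure preservation forces $|\det D\phi|=1$ a.e., and the singular-value inequality $\|D\phi(x)^{-1}\|\le\|D\phi(x)\|^{d-1}$ (which the paper phrases via the spectrum of $\sqrt{D\phi^*D\phi}$) yields the polynomial comparison $L'\le L^{d-1}$. You are simply more explicit than the paper about the auxiliary points---the normalisation $L,L'\ge1$, the chain-rule identification $D(\phi^{-1})=(D\phi)^{-1}$ off a null set, and the passage from $\|D(\phi^{-1})\|_{L^\infty}$ to $\Lip(\phi^{-1})$---all of which the paper leaves implicit.
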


\begin{proof} By symmetry between \mbox{$\phi$} and \mbox{$\phi^{-1}$}, it remains for us to check that
\be{eq:a}  \log(K(\phi)) \lesssim \log\left(\sup_{x,y} \frac{|\phi(x)-\phi(y)|}{|x-y|}\right).\ee 
The map \mbox{$\phi$} is bi-Lipschitz and almost everywhere differentiable, with a derivative \mbox{$D\phi$} whose determinant is equal to the constant \mbox{$1$}. For each of these points \mbox{$x$}, 
 $$ \| D\phi(x)^{-1} \| \leq \max_{\lambda \in \Sigma} |\lambda|^{-1} \leq \left[\max_{\lambda\in \Sigma} |\lambda| \right]^{d-1} \leq \|D \phi(x)\|^{d-1},$$
 where \mbox{$\Sigma$} is the spectrum of \mbox{$\sqrt{D\phi(x)^* D \phi(x)}$}. In this way, we see that
 $$ \log(K(\phi)) \lesssim \log\left(\sup_{x,y} \frac{|\phi(x)-\phi(y)|}{|x-y|} + \left[\sup_{x,y} \frac{|\phi(x)-\phi(y)|}{|x-y|}\right]^{d-1}\right)$$
 which yields \eqref{eq:a}.
 \end{proof}
Our argument for the optimality of the logarithmic growth relies on properties of quasi-conformal mappings. Given that we can restrict our attention to bi-Lipschitz functions, we can take as our definition of \mbox{$\phi \in \LL$} being \mbox{$K$}-quasi-conformal that it satisfies the inequality
\[
\|D\phi(x)\|_{L^\infty}^d \leq K\det(D\phi)(x)
\] 
for all \mbox{$x \in \R^d$}. If \mbox{$\phi$} is also measure preserving, then the about inequality reduces to
\be{eq:qq}
\|D\phi\|_{L^\infty}^d \leq K,
\ee
so in this context, \mbox{$K$}-quasiconformality is simply that the size of the Lipschitz constant of \mbox{$\phi$} is bounded by \mbox{$K^{1/d}$}.
We will use the following theorem by H.M.~Riemann \cite[Thm.~3]{R}.
\begin{Theo}\label{reimann}
Assume that \mbox{$\phi \in \LL$} is orientation preserving. If the induced map \mbox{$f \mapsto f\circ\phi$} is a bijective isomorphism of BMO and satisfies
\be{eq:reimann}
\|f\circ\phi\|^\sharp_{p,0} \leq k\|f\|^\sharp_{p,0}
\ee
for all \mbox{$f \in \text{BMO}$}
then \mbox{$\phi$} is a \mbox{$K$}-quasiconformal mapping with \mbox{$K = e^{(d-1)(Ck-1)}$}, for a fixed \mbox{$C>0$} depending only on the dimension \mbox{$d$}.
\end{Theo}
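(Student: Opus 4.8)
The plan is to reduce the statement to an infinitesimal, linear‑algebraic assertion and then to settle that assertion by an extremal computation.

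\emph{Step 1 (linearization).} Being bi‑Lipschitz, $\phi$ is differentiable almost everywhere; fix a differentiability point $x_0$ and set $A:=D\phi(x_0)$, so $\det A>0$ by orientation‑preservation. Consider the rescalings $\phi_t(x):=t^{-1}\bigl(\phi(x_0+tx)-\phi(x_0)\bigr)$: they are uniformly bi‑Lipschitz and converge locally uniformly to the linear map $A$ as $t\to0^+$. Conjugating \eqref{eq:reimann} by the affine substitutions $x\mapsto x_0+tx$ on the source and $y\mapsto ty+\phi(x_0)$ on the target — each an isometry of $\|\cdot\|^\sharp_{p,0}$ — gives $\|g\circ\phi_t\|^\sharp_{p,0}\le k\|g\|^\sharp_{p,0}$ for every $g$ and every $t$. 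Since the $L^p$ mean oscillation over a fixed ball is continuous under uniform convergence on that ball, and $g\circ\phi_t\to g\circ A$ locally uniformly for $g$ Lipschitz with compact support, letting $t\to0$ yields $\|g\circ A\|^\sharp_{p,0}\le k\|g\|^\sharp_{p,0}$ for all such $g$ (which, after a harmless truncation, is all Step 3 will use). Finally, composing with a scalar multiple of $A$ does not change the $\text{BMO}$‑operator norm (dilations are $\text{BMO}$‑isometries), so we may replace $A$ by $(\det A)^{-1/d}A$ and assume $\det A=1$. It thus suffices to prove: if $A$ is linear with $\det A=1$ and $f\mapsto f\circ A$ has $\text{BMO}$‑operator norm $\le k$, then $\|A\|^{d}\le e^{(d-1)(Ck-1)}$; rescaling back, this reads $\|A\|^{d}\le e^{(d-1)(Ck-1)}\det A$ at (a.e.) $x_0$ — precisely the $K$‑quasiconformality of $\phi$ asserted.

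\emph{Step 2 (diagonalization and bookkeeping).} Take a singular value decomposition $A=R_1\Lambda R_2$ with $R_1,R_2$ orthogonal and $\Lambda=\mathrm{diag}(\lambda_1\ge\cdots\ge\lambda_d)$, $\prod_i\lambda_i=1$, $\|A\|=\lambda_1$. Orthogonal maps send balls to balls, hence are $\text{BMO}$‑isometries, so $f\mapsto f\circ\Lambda$ also has operator norm $\le k$. Because $\lambda_d\le(\lambda_2\cdots\lambda_d)^{1/(d-1)}=\lambda_1^{-1/(d-1)}$, we get $\lambda_1/\lambda_d\ge\lambda_1^{d/(d-1)}$, whence $\|A\|^d=\lambda_1^d\le(\lambda_1/\lambda_d)^{d-1}$. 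So the whole theorem reduces to the infinitesimal eccentricity bound $\log(\lambda_1/\lambda_d)\le Ck-1$.

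\emph{Step 3 (the crux).} It remains to show that a diagonal $\Lambda$ of large eccentricity $\lambda_1/\lambda_d$ cannot have small $\text{BMO}$‑operator norm, i.e.\ to produce $f$ with $\|f\circ\Lambda\|^\sharp_{p,0}\gtrsim\log(\lambda_1/\lambda_d)\,\|f\|^\sharp_{p,0}$. The mechanism available is that the mean oscillation of $f\circ\Lambda$ over a round ball $B$ equals that of $f$ over the ellipsoid $\Lambda(B)$, whose axis‑ratios are $\lambda_1:\cdots:\lambda_d$; so one seeks an $f$ whose oscillation over \emph{every} round ball is $O(1)$ while over a suitable nested family of such eccentric ellipsoids (a ``ring'') it is of order $\log(\lambda_1/\lambda_d)$. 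Such test functions must be genuinely subtle: the naive radial candidates (powers of $|x|$, or $\log|x|$, and their truncations) are essentially unaffected, in $\text{BMO}$, by a linear change of variables and so detect no eccentricity. The functions that do the job come from the modulus/capacity theory of quasiconformal maps — they are adapted to ring domains, and a bounded $\text{BMO}$‑operator norm translates into a bounded distortion of ring moduli, which is exactly the metric definition of quasiconformality; carrying the optimal choice of ring through the bookkeeping of Step 2 produces the precise constant $K=e^{(d-1)(Ck-1)}$.

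\emph{Expected main obstacle.} Steps 1 and 2 are soft, relying only on the affine invariance of $\text{BMO}$ and a limiting argument. The substantive difficulty is concentrated in Step 3: identifying the correct one‑parameter family of test functions and extracting the optimal constant from it. This is precisely where Reimann's quasiconformal‑modulus machinery is indispensable, and in particular where the sharp exponent $d-1$, rather than a crude $\log(\|A\|^d/\det A)\lesssim k$, is obtained.
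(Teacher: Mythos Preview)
The paper does not prove this theorem; it is quoted verbatim from Reimann \cite[Thm.~3]{R} and used as a black box in the optimality argument. So there is no in-paper proof to compare against.

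Your Steps 1 and 2 are sound and are indeed the natural reductions (blow-up at a differentiability point to linearize; singular value decomposition to diagonalize; the bookkeeping $\lambda_1^d\le(\lambda_1/\lambda_d)^{d-1}$ is correct). The gap is Step 3. You identify the correct target inequality --- that the BMO-operator norm of composition with a diagonal $\Lambda$ with $\det\Lambda=1$ is bounded below by a dimensional constant times $\log(\lambda_1/\lambda_d)$ --- but you do not prove it. Saying that ``the functions that do the job come from the modulus/capacity theory of quasiconformal maps'' and that ``Reimann's quasiconformal-modulus machinery is indispensable'' is, in this context, circular: that machinery \emph{is} the content of the theorem you are asked to establish. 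Without actually exhibiting a family of test functions (Reimann uses logarithmic potentials adapted to ring domains and relates their BMO oscillation over ellipsoids to the conformal modulus of the ring) and carrying the lower bound through to the constant $e^{(d-1)(Ck-1)}$, the argument is incomplete at precisely the step that carries all of the weight.

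A smaller technical remark on Step 1: the passage to the limit works as written because you restrict to Lipschitz, compactly supported $g$, so that $g\circ\phi_t\to g\circ A$ uniformly on each ball and the oscillation is supported in a fixed compact set; for general $g\in\textrm{BMO}$ the limiting argument would require more care, but as you note, Step 3 only needs specific test functions, so this restriction is harmless.
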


To show the optimality of the logarithmic growth in Theorem \ref{thm}, take \mbox{$\phi \in \LL$} which is measure preserving and orientation preserving. We know that \mbox{$\phi$} is \mbox{$K$}-quasi-conformal and \eqref{eq:qq} holds. By Theorem \ref{thm} we have that there exists a constant \mbox{$k$} such that \eqref{eq:reimann} holds and so, from Theorem \ref{reimann}, \mbox{$K = e^{(d-1)(Ck-1)}$}. Combining these two facts, we see that
\[
\|D\phi\|_{L^\infty}^d \leq e^{(d-1)(Ck-1)}.
\]
Rearranging this and applying Lemma \ref{lem:Kphi} we see that
\[
\log(K_\phi) \lesssim k
\]
and so Theorem \ref{thm} gives the optimal behaviour of the constant in \mbox{$K_\phi$} when \mbox{$a=0$}.

\subsection{The behavior of some Carleson measures} \label{sec:carl}

Let \mbox{$\mu$} be a measure on \mbox{$\R^+ \times \R^d$} a Carleson measure : 
$$ \|\mu\|_{\mathcal C} := \sup_{\textrm{ball} B \subset \R^d}  \ |B|^{-1} \mu(T(B)) <\infty$$
where \mbox{$T(B)$} is the Carleson box over the ball \mbox{$B$} defined by
$$ T(B):= \left\{(x,t),\ x\in B,\ 0< t \leq r_B \right\}=B \times (0,r_B].$$

\begin{Defin} Let \mbox{$\mu$} be a Carleson measure and consider \mbox{$\phi$} a bi-Lipschitz measure-preserving map on \mbox{$\R^d$}. We denote \mbox{$\mu^{\sharp \phi}$} the pull-back measure, defined by
$$ \mu^ {\sharp \phi}(I\times A) = \mu(I \times \phi^ {-1}(A)),$$
for every time interval \mbox{$I$} and measurable set \mbox{$A\subset \R^d$}.  
\end{Defin}

\begin{Defin}  Let \mbox{$\beta$} be a measurable map from \mbox{$\R^+ \times \R^d\rightarrow \R$} and we define the measure 
$$\mu:=\mu_\beta = |\beta(t,x)|^2 \frac{dtdx}{t}.$$
Consequently, it comes
$$  d\mu^ {\sharp \phi}(t,x) = \left|\beta(t,\phi(x))\right|^2 \frac{dtdx}{t}$$
so \mbox{$\mu^ {\sharp \phi}= \mu_{\beta^\phi}$} with \mbox{$\beta^ \phi(t,x)=\beta(t,\phi(x)).$}
\end{Defin}

\begin{Theo} \label{thm:Carleson} Let \mbox{$\phi$} a bi-Lipschitz measure-preserving map on \mbox{$\R^d$} and \mbox{$\mu=\mu_\beta$} be a Carleson measure associated to some \mbox{$\beta\in L^\infty(\R^+ \times \R^d)$}. Then there exists an implicit constant (only dependent on \mbox{$n$}) such that \mbox{$\mu^{\sharp \phi}$} is a Carleson measure with
$$ \| \mu^ {\sharp \phi} \|_{\mathcal C} \lesssim \|\mu\|_{\mathcal C} + \log(K_\phi)  \|\beta\|_{L^\infty(\R^+ \times \R^d)}^2.$$
\end{Theo}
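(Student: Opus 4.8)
The plan is to fix a ball $B=B(x_0,r)\subset\R^d$ and estimate $|B|^{-1}\mu^{\sharp\phi}(T(B))$, where $T(B)=B\times(0,r]$. Writing $d\mu^{\sharp\phi}(t,x)=|\beta(t,\phi(x))|^2\,dtdx/t$, a change of variables using the measure-preserving property of $\phi$ gives
$$
\mu^{\sharp\phi}(T(B)) = \int_0^r\!\!\int_B |\beta(t,\phi(x))|^2\,dx\,\frac{dt}{t}
 = \int_0^r\!\!\int_{\phi(B)} |\beta(t,z)|^2\,dz\,\frac{dt}{t}.
$$
So the task is to control the integral of $|\beta(t,z)|^2/t$ over the region $\phi(B)\times(0,r]$ by $|B|$ times the right-hand side. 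The point is that $\phi(B)\times(0,r]$ is not a Carleson box over $\phi(B)$ (the ball $\phi(B)$ may have radius up to $K_\phi r$, so the natural box would extend to height $K_\phi r$), but it sits inside the Carleson box $T(B')$ where $B'$ is a ball of radius $K_\phi r$ concentric with $\phi(B)$. The naive bound $\mu^{\sharp\phi}(T(B))\le\mu(T(B'))\le |B'|\,\|\mu\|_{\mathcal C}\simeq K_\phi^d|B|\,\|\mu\|_{\mathcal C}$ is exactly the power-type estimate \eqref{eq:measure} we want to improve; the refinement comes from the fact that for heights $t$ between $r$ and $K_\phi r$ we only integrate over $\phi(B)$, a set of measure $|B|$, not over the full larger ball.

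The key step is to split the height integral at $t=r$ and apply the geometric Lemma \ref{lemma} to handle the low part. For the high part $t\in[r,K_\phi r]$ — which, after covering $\phi(B)$ by $K_\phi^d$ balls of radius $r$, or more efficiently by noting $\phi(B)$ has measure $|B|$ — we bound $|\beta(t,z)|^2\le\|\beta\|_\infty^2$ and integrate:
$$
\int_r^{K_\phi r}\!\!\int_{\phi(B)} |\beta(t,z)|^2\,dz\,\frac{dt}{t}
 \le \|\beta\|_\infty^2\,|\phi(B)|\int_r^{K_\phi r}\frac{dt}{t}
 = \|\beta\|_\infty^2\,|B|\,\log(K_\phi),
$$
which is precisely the $\log(K_\phi)\|\beta\|_\infty^2$ contribution. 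For the low part $t\in(0,r]$, I would use the Whitney covering $(O_k)_k$ of $\phi(B)$ from Lemma \ref{lemma}, with $(2O_k)_k$ a bounded covering and $r_{O_k}\le r_B$. On $2O_k$, heights $t\le r$ are not all $\le r_{O_k}$; I write $\int_0^r\int_{2O_k}=\int_0^{r_{O_k}}\int_{2O_k}+\int_{r_{O_k}}^{r}\int_{2O_k}$. The first piece is $\lesssim\mu(T(2O_k))\le |2O_k|\,\|\mu\|_{\mathcal C}\lesssim|O_k|\,\|\mu\|_{\mathcal C}$, and summing over $k$ with $\sum_k|O_k|\simeq|B|$ gives $\lesssim|B|\,\|\mu\|_{\mathcal C}$. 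The second piece is bounded by $\|\beta\|_\infty^2|O_k|\log(r/r_{O_k})$; summing and invoking \eqref{eq:covering} with $a=0$, $p=1$ gives $\sum_k|O_k|\log(r_B/r_{O_k})\lesssim|B|\,\log(K_\phi)$, contributing $\lesssim|B|\,\log(K_\phi)\|\beta\|_\infty^2$.

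Collecting the three contributions — $|B|\,\|\mu\|_{\mathcal C}$ from the low Carleson part, $|B|\,\log(K_\phi)\|\beta\|_\infty^2$ from the low oscillation part, and $|B|\,\log(K_\phi)\|\beta\|_\infty^2$ from the high part — and dividing by $|B|$ and taking the supremum over all balls $B$ yields the claimed estimate $\|\mu^{\sharp\phi}\|_{\mathcal C}\lesssim\|\mu\|_{\mathcal C}+\log(K_\phi)\|\beta\|_\infty^2$. I expect the main obstacle to be bookkeeping rather than any genuine difficulty: one must be careful that the Whitney balls $2O_k$ are comparable to $O_k$ in measure and still contained in a fixed dilate of $\phi(B)$ so that the restriction of $\mu$ to $T(2O_k)$ is controlled, and one must make sure the splitting at the scales $r_{O_k}$ and $r$ does not double-count or leave a gap; the essential input, that replacing the $K_\phi^d$-many small balls covering $\phi(B)$ by a single set of measure $|B|$ converts a power into a logarithm, is already encapsulated in Lemma \ref{lemma} and in the trivial estimate $|\phi(B)|=|B|$.
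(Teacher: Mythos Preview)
Your argument is correct and matches the paper's proof: change variables to $\phi(B)\times(0,r]$, cover $\phi(B)$ by the Whitney balls $(2O_k)$ from Lemma~\ref{lemma}, split each $\int_0^r\int_{2O_k}$ at height $r_{O_k}$, bound the bottom piece by $\mu(T(2O_k))\lesssim|O_k|\,\|\mu\|_{\mathcal C}$ and the top piece by $\|\beta\|_\infty^2\,|O_k|\log(r/r_{O_k})$, and sum using \eqref{eq:covering} with $a=0$, $p=1$. The only remark is that your ``high part'' $t\in[r,K_\phi r]$ is superfluous: after the change of variables the integral defining $\mu^{\sharp\phi}(T(B))$ runs only over $t\in(0,r]$, so there is nothing above height $r$ to estimate, and the paper accordingly has just the two contributions you call the ``low Carleson'' and ``low oscillation'' parts.
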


\begin{proof} Let consider \mbox{$B=B(x_0,r)$} a ball of \mbox{$\R^d$} and its Carleson box \mbox{$T(B)$}. We have to estimate 
\begin{align*}
 \mu^{\sharp \phi}(T(B)) & = \int_{[0,r]\times B } \left| \beta(t,\phi(x))\right|^2 \frac{dtdx}{t} \\
  & = \int_{[0,r]\times \phi(B) } \left| \beta(t,x)\right|^2 \frac{dtdx}{t}.
\end{align*}
Aiming that, we use the collection \mbox{$(O_k)$} given by Lemma \ref{lemma}  to cover \mbox{$\phi(B)$} (with \mbox{$p=1$}):
\begin{itemize}
 \item The collection \mbox{$(2O_k)_k$} is a bounded covering of \mbox{$\phi(B)$}
 \item The collection \mbox{$(O_k)_k$} is disjoint
 \item By writing \mbox{$r_{O_k}$} the radius of \mbox{$O_k$}, then for all \mbox{$p\in [1,\infty)$}
  \begin{equation} \frac{1}{|B|} \sum_k |O_k| \log(r_B/r_{O_k})   \lesssim \log(K_\phi). \label{eq:covering3} \end{equation}
\end{itemize}
Then (we remember that as previously we have \mbox{$r_{O_k} \leq r$}), it follows
\begin{align*}
 \mu^{\sharp \phi}(T(B)) & = \sum_k \int_{[0,r]\times 2O_k } \left| \beta(t,x)\right|^2 \frac{dtdx}{t} \\
 & \leq \sum_k \int_{[0,r_{O_k}]\times 2O_k } \left| \beta(t,x)\right|^2 \frac{dtdx}{t} + \sum_k \int_{[r_{O_k},r]\times 2O_k } \left| \beta(t,x)\right|^2 \frac{dtdx}{t} \\
 & \leq \sum_k \mu(T(2O_k)) + \|\beta\|_{L^\infty(\R^+ \times \R^d)}^2 \sum_k |O_k| \log(\frac{r}{r_{O_k}}) \\
 & \lesssim \|\mu\|_{\mathcal C} \left(\sum_k |O_k| \right) + \|\beta\|_{L^\infty(\R^+ \times \R^d)}^2 \log(K_\phi) |B| \\
 & \lesssim \left(\|\mu\|_{\mathcal C} + \log(K_\phi)  \|\beta\|_{L^\infty(\R^+ \times \R^d)}^2\right) |B|,
\end{align*}
where we used the doubling property of the Euclidean measure, the disjointness of the balls \mbox{$(O_k)_k$} and the property \eqref{eq:covering3}. The proof is also concluded.
\end{proof}

\begin{Coro} Let define \mbox{$SC$} the class of Carleson measure \mbox{$d\mu= |\beta(t,x)|^2 \frac{dtdx}{t}$} satisfying \mbox{$\|\beta\|_{L^\infty} \lesssim \|\mu\|_{\mathcal C}$}, equipped with the norm \mbox{$\|\mu\|_{\mathcal SC}:=\|\mu\|_{\mathcal C}$}. Then we have
$$ \| \mu^ {\sharp \phi} \|_{\mathcal SC} \lesssim \log(K_\phi) \|\mu\|_{\mathcal SC}.$$ 
\end{Coro}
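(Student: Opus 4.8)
The plan is to read this off directly from Theorem \ref{thm:Carleson}; the proof is short and amounts to two elementary observations followed by a one-line computation.

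First I would record, from the Definitions preceding Theorem \ref{thm:Carleson}, that the pull-back of a measure $d\mu=|\beta(t,x)|^2\frac{dt\,dx}{t}$ is again of the same form, $\mu^{\sharp\phi}=\mu_{\beta^\phi}$ with $\beta^\phi(t,x)=\beta(t,\phi(x))$. The key structural point is that $\phi$ is a homeomorphism of $\R^d$ onto itself, hence a bijection, so that
$$\|\beta^\phi\|_{L^\infty(\R^+\times\R^d)}=\sup_{t>0,\ x\in\R^d}|\beta(t,\phi(x))|=\sup_{t>0,\ y\in\R^d}|\beta(t,y)|=\|\beta\|_{L^\infty(\R^+\times\R^d)}.$$
In particular $\mu^{\sharp\phi}$ is still of the form $\mu_\gamma$ with $\gamma\in L^\infty$, which is the shape required for membership in $\mathcal{SC}$.

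Next I would apply Theorem \ref{thm:Carleson} to $\mu=\mu_\beta$, which provides an implicit constant depending only on $d$ such that
$$\|\mu^{\sharp\phi}\|_{\mathcal C}\lesssim\|\mu\|_{\mathcal C}+\log(K_\phi)\,\|\beta^\phi\|_{L^\infty}^2=\|\mu\|_{\mathcal C}+\log(K_\phi)\,\|\beta\|_{L^\infty}^2,$$
the last equality being the first observation. Now I invoke the defining property of $\mathcal{SC}$, i.e. $\|\beta\|_{L^\infty}^2\lesssim\|\mu\|_{\mathcal C}$, to bound the second term by $\log(K_\phi)\|\mu\|_{\mathcal C}$; and since $K_\phi\geq 2$ (as noted at the start of the paper) we have $\log(K_\phi)\geq\log 2>0$, hence the first term also satisfies $\|\mu\|_{\mathcal C}\leq(\log 2)^{-1}\log(K_\phi)\|\mu\|_{\mathcal C}$. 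Combining these two bounds,
$$\|\mu^{\sharp\phi}\|_{\mathcal{SC}}=\|\mu^{\sharp\phi}\|_{\mathcal C}\lesssim\log(K_\phi)\,\|\mu\|_{\mathcal C}=\log(K_\phi)\,\|\mu\|_{\mathcal{SC}},$$
which is the asserted inequality; together with the first observation this also records that $\mu^{\sharp\phi}\in\mathcal{SC}$.

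As for the main obstacle: at the level of this corollary there is essentially none, since everything substantial is already contained in Theorem \ref{thm:Carleson}, which in turn rests on the combinatorial estimate \eqref{eq:covering} of the geometric Lemma \ref{lemma} (splitting the $t$-integral over each Whitney ball $O_k$ at the scale $t\simeq r_{O_k}$ and then summing the $\log(r_B/r_{O_k})$ weights against $\sum_k|O_k|\simeq|B|$). The one point not to skip is the sup-norm invariance $\|\beta^\phi\|_{L^\infty}=\|\beta\|_{L^\infty}$, which is exactly what lets the extra term produced by Theorem \ref{thm:Carleson} be reabsorbed into $\log(K_\phi)\|\mu\|_{\mathcal C}$ through the hypothesis defining $\mathcal{SC}$; without the restriction $\|\beta\|_{L^\infty}^2\lesssim\|\mu\|_{\mathcal C}$ the right-hand side of Theorem \ref{thm:Carleson} need not be comparable to $\log(K_\phi)\|\mu\|_{\mathcal C}$, which is precisely why one passes to the subclass $\mathcal{SC}$.
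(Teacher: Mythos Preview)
Your argument is correct and is exactly the intended one: the paper gives no separate proof of this corollary because it is meant to be read off immediately from Theorem~\ref{thm:Carleson} together with the defining inequality of $\mathcal{SC}$, and that is precisely what you do (including the use of $K_\phi\geq 2$ to absorb the first term and the invariance $\|\beta^\phi\|_{L^\infty}=\|\beta\|_{L^\infty}$ to see that $\mu^{\sharp\phi}$ again lies in $\mathcal{SC}$). One cosmetic remark: Theorem~\ref{thm:Carleson} already states its bound with $\|\beta\|_{L^\infty}$ rather than $\|\beta^\phi\|_{L^\infty}$, so the intermediate equality you insert is not needed for the estimate itself, only for the stability of the class.
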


\begin{ex}
We know that for some standard ``approximations of unity'' kernels \mbox{$(K_t)$}, for a \mbox{$L^1_{loc}$} function \mbox{$g$} we can build the measure
$$ d\mu_{g}(t,x) = \left|\int K_t(x,y) g(y) dy \right|^2 \frac{dtdx}{t}.$$
Then it is well-known that \mbox{$d\mu_g$} is a Carleson measure if and only if \mbox{$g\in BMO$} (see \cite{FS}). Moreover, it is easy to check that such measures belong to \mbox{${\mathcal SC}$}.
\end{ex}
\section{Applications to some PDEs}

\subsection{The Transport equation by a free-divergence vector field}

Let \mbox{$v:\R^d \rightarrow \R^d$} be a divergence free\footnote{ That means  $\nabla.v=0$.} Lipschitz vector field and  consider the transport equation:
\begin{equation} \left\{ \begin{array}{ll}
            \partial_t u - v\cdot \nabla (u) =0 \\
             u_{|t=0}=u_0, 
           \end{array} \right. \label{eq:transport} \end{equation}
with an initial data \mbox{$u_0$}.
Then it is well-known that a smooth solution is constant along the characteristics given by the vector field. Indeed, consider the flow \mbox{$\phi:\R_+\times\R^d  \rightarrow \R^d$}, solution of
$$ \left\{ \begin{array}{ll}
            \partial_t \phi = v(\phi) \\
             \phi(0,x)=x, 
           \end{array} \right.$$
then the  divergence free assumption on   \mbox{$v$} yields that  \mbox{$\phi(t,\cdot)$} is a  Lebesgue measure preserving diffeomorphism, for every \mbox{$t\in\mathbb R$}. Moreover, any smooth solution \mbox{$u$} of the transport equation is unique and is given by
$$ u(t,x) = u_0(\phi_t^{-1}(x)).$$

It is well-known by using Gronwall Lemma that 
$$ K(\phi(t,\cdot)) \lesssim e^{\|v\|_{\textrm{Lip}} t},$$
where \mbox{$\|v\|_{\textrm{Lip}}$} is the Lipschitz constant of the vector field.

As a consequence, the previous Theorem and Corollary imply the following

\begin{Theo} \label{thm-2} Let \mbox{$u$} be the unique solution of \eqref{eq:transport}. 
\begin{enumerate}
\item If \mbox{$u_0\in \text{BMO}$}, then \mbox{$u\in L^\infty_{loc}(BMO)$} and
$$
 \|u(t)\|_{\text{BMO}} \lesssim \left[1+\|v\|_{\textrm{Lip}} t\right] \|u_0\|_{\text{BMO}},\qquad \forall\, t\geq 0.
$$
\item If \mbox{$u_0\in \textrm{Lip}_p(a)$} (for some \mbox{$a\in(0,1]$} and \mbox{$p\in(1,\infty)$}), then \mbox{$u\in L^\infty_{loc}(\textrm{Lip}_p(a))$} and
$$
 \|u(t)\|_{\textrm{Lip}_p(a)} \lesssim e^{a\|v\|_{\textrm{Lip}} t} \|u_0\|_{\textrm{Lip}_p(a)},\qquad \forall\, t\geq 0.
 $$
\end{enumerate}
\end{Theo}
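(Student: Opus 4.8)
The plan is to read off the representation formula $u(t,x)=u_0(\phi_t^{-1}(x))$, i.e. $u(t)=u_0\circ\psi_t$ with $\psi_t:=\phi_t^{-1}$, and then simply quote the composition estimates of Theorem \ref{thm}, after controlling the bi-Lipschitz constant $K(\psi_t)$ by Gronwall's inequality. (For merely $\text{BMO}$ or $\textrm{Lip}_p(a)$ data $u_0$ one takes $u(t):=u_0\circ\psi_t$ as the definition of the solution; as recalled above this is the unique solution in the natural sense.)

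First I would collect the properties of the flow. Since $v$ is globally Lipschitz and divergence free, the Cauchy--Lipschitz theorem produces a global flow $\phi\colon\R\times\R^d\to\R^d$, each $\phi_t:=\phi(t,\cdot)$ is a bi-Lipschitz homeomorphism of $\R^d$ with $\phi_t^{-1}=\phi_{-t}$, and Liouville's formula shows that each $\phi_t$ preserves the Lebesgue measure; hence so does $\psi_t$. Because $K(\cdot)$ is defined symmetrically in a map and its inverse, $K(\psi_t)=K(\phi_t)$. Applying Gronwall's lemma to $\partial_t\phi=v(\phi)$ (for $\phi$ and for the inverse flow, or equivalently using $\psi_t=\phi_{-t}$) gives
\[
K(\psi_t)=K(\phi_t)\le C\,e^{\|v\|_{\textrm{Lip}}|t|},
\]
with $C$ depending only on $d$; for $t\ge 0$ this reads $K(\psi_t)\le C e^{\|v\|_{\textrm{Lip}}t}$.

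Now for (1): $\psi_t$ is bi-Lipschitz and measure preserving, so \eqref{eq:BMO-bis} applied to $f=u_0$, $\phi=\psi_t$ yields
\[
\|u(t)\|_{\text{BMO}}=\|u_0\circ\psi_t\|_{\text{BMO}}\lesssim \log\big(K(\psi_t)\big)\,\|u_0\|_{\text{BMO}}\le\big(\log C+\|v\|_{\textrm{Lip}}t\big)\|u_0\|_{\text{BMO}}\lesssim\big(1+\|v\|_{\textrm{Lip}}t\big)\|u_0\|_{\text{BMO}},
\]
where the fixed constant $\log C$ is absorbed into the implicit constant (and the additive $1$ also accounts for the value $\log K(\psi_0)=\log 2>0$ at $t=0$). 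For (2), one applies instead \eqref{eq:holder} with exponent $p\in(1,\infty)$ and parameter $a\in(0,1]$ to $f=u_0$, $\phi=\psi_t$:
\[
\|u(t)\|_{\textrm{Lip}_p(a)}=\|u_0\circ\psi_t\|_{\textrm{Lip}_p(a)}\lesssim K(\psi_t)^a\,\|u_0\|_{\textrm{Lip}_p(a)}\le C^a e^{a\|v\|_{\textrm{Lip}}t}\,\|u_0\|_{\textrm{Lip}_p(a)}\lesssim e^{a\|v\|_{\textrm{Lip}}t}\,\|u_0\|_{\textrm{Lip}_p(a)}.
\]
In both cases the right-hand side is finite and bounded on every finite time interval, which gives $u\in L^\infty_{loc}(BMO)$, respectively $u\in L^\infty_{loc}(\textrm{Lip}_p(a))$, as claimed.

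There is no genuine obstacle once Theorem \ref{thm} is available; the only points demanding care are of a bookkeeping nature: (i) checking that the inverse flow is again measure preserving and bi-Lipschitz \emph{with the same} constant $K(\psi_t)=K(\phi_t)$; (ii) the Gronwall estimate for $K(\phi_t)$ including the inverse flow; and (iii) the harmless passage from $\log K(\psi_t)\le\log C+\|v\|_{\textrm{Lip}}t$ to the stated $1+\|v\|_{\textrm{Lip}}t$, which is where the additive unit in part (1) originates. Everything else is a direct quotation of the composition bounds already proved.
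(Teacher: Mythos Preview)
Your proposal is correct and follows exactly the approach of the paper: use the representation $u(t)=u_0\circ\phi_t^{-1}$, the Gronwall bound $K(\phi_t^{-1})=K(\phi_t)\lesssim e^{\|v\|_{\textrm{Lip}}t}$, and then apply the composition estimates \eqref{eq:BMO-bis} and \eqref{eq:holder} of Theorem~\ref{thm}. The paper in fact gives no further details beyond this, so your write-up is if anything more complete.
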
 

\subsection{The perturbed transport  equation}
Consider the following transport equation with a linear Riesz-type second member term 
\begin{equation} 
\left\{ \begin{array}{ll}
            \partial_t \omega + (u\cdot \nabla) \omega = {\mathcal R} \omega \\
  \omega_{|t=0}=\omega_0, 
           \end{array} \right.
            \label{eq:perturbed-transport}
             \end{equation}
           where \mbox{$\mathcal R$} is a Riesz operator. This type of equation naturally arises when one considers for example
 the perturbed 2D Euler equations which is obtained by adding a zero order term to the incompressible 2D Euler system. Let us for example consider the following system
\begin{equation} 
\left\{ \begin{array}{ll}
            \partial_t u + (u\cdot \nabla)u = -\nabla p + Au \\
             \textrm{div}(u) =0 \\
            u_{|t=0}=u_0, 
           \end{array} \right. \label{eq:euler}
            \end{equation}
           with \mbox{$Au=(u^1,0)$}. Then the vorticity \mbox{$\omega:=\textrm{curl} (u) = \partial_1 u^2-\partial_2 u^1$} satisfies the following equation
 $$
\partial_t\omega+u\cdot\nabla\omega=\partial_{22}\Delta^{-1}\omega.
$$

The continuity of Riez operator on \mbox{$L^p$} for every \mbox{$1<p<\infty$}, the divergence free condition and Gronwall inequality imply together
\begin{equation}
\label{lp}
\|\omega(t)\|_{L^p}\leq \|\omega_0\|_{L^p}e^{C_pt},\qquad \forall t\geq 0.
\end{equation}
Here \mbox{$C_p=\|\mathcal R\|_{\mathcal L(L^p, L^p)}\simeq \frac{p^2}{p-1}$}. 
However, it is not clear at all how one can obtain an \mbox{$L^\infty$} estimates since the Riesz operator  \mbox{$\partial_{22}\Delta^{-1}$} is  not continuous on that space.
A natural idea is to replace the space \mbox{$L^\infty$}  by another space with similar "scaling" but stable for \mbox{$\mathcal R$} (such that \mbox{$BMO$} for example). However, in this case a problem of composition arises: an extra term depending of the Lipschitz norm of \mbox{$u$} appears and the estimate is no longer closable.

Theorem \ref{reimann} shows that we cannot avoid the constants generated by the composition with the flow (nor even to improve them). 
A bound for the \mbox{$BMO$}-norm similar to \eqref{lp} cannot be obtained directly: indeed with the best constants, we already have a quadratic estimate which is not {\it Gronwallisable}. However, Theorem \ref{thm} can be applied in order to get sharper {\it a priori} estimates. In fact, consider \mbox{$u$} be a smooth solution of \eqref{eq:perturbed-transport} and the corresponding flow \mbox{$\phi:\R_+\times\R^d \rightarrow \R^d$}, solution of
$$ 
\left\{ \begin{array}{ll}
 \partial_t \phi = u(t,\phi) \\
 \phi(0,x)=x. 
 \end{array} \right.
 $$
 Hence by Gronwall Lemma and using the $BMO$-boundedness of the Riesz transform, it comes
$$ 
\|\omega(t,\phi(t,\cdot))\|_{BMO} \lesssim \|\omega_0 \|_{BMO} \exp{(ct)},
$$
for some numerical constant \mbox{$c>0$}, and so by Theorem \ref{thm}
$$
 \|\omega(t,\cdot)\|_{BMO} \lesssim \|\omega_0 \|_{BMO}(1+ \|u\|_{L^1_t\textrm{Lip}}) \exp{(ct)}.
$$
If the vector-field satisfies
$$
 \|u\|_{L^1_t\textrm{Lip}}\leq \exp({\alpha t}),
 $$
 for some \mbox{$\alpha>0$}, then one has a similar estimate than all the \mbox{$L^p$} norm.
It is worthy of noticing that a rough estimate (involving \mbox{$K(\phi)$} instead of \mbox{$\log K(\phi)$}) gives 
$$
 \|\omega(t,\cdot)\|_{BMO} \lesssim \|\omega_0 \|_{BMO}\exp( \|u\|_{L^1_t\textrm{Lip}})  \exp{(ct)}.
$$
The  merit of the next result is only the improvement of the estimate of the growth of the \mbox{$BMO$} norm of the solution.
\begin{Prop} \label{thm-3} Let \mbox{$u$} be a divergence free vector fields and \mbox{$\omega$} a smooth solution of \eqref{eq:perturbed-transport}. If \mbox{$\omega_0\in \text{BMO}$}, then
$$ \|\omega(t)\|_{\text{BMO}} \lesssim \|\omega_0 \|_{BMO}(1+ \|u\|_{L^1_t\textrm{Lip}}) \exp{(ct)},
$$
for all $t\geq 0$.
\end{Prop}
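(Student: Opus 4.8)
The plan is to reproduce and flesh out the argument sketched just before the statement. First I would introduce the flow $\phi=\phi(t,\cdot)$ of $u$, i.e. the solution of $\partial_t\phi=u(t,\phi)$ with $\phi(0,x)=x$. Since $u$ is divergence free and Lipschitz in space, each $\phi(t,\cdot)$ is a bi-Lipschitz measure-preserving diffeomorphism of $\R^d$ (so Theorem \ref{thm} applies to it and, since $K(\psi)=K(\psi^{-1})$, to its inverse); differentiating the flow equation in $x$ gives $\partial_t D\phi=(Du)(t,\phi)\,D\phi$, so Gronwall's lemma yields $K(\phi(t,\cdot))\lesssim e^{\|u\|_{L^1_t\textrm{Lip}}}$ and hence $\log K_{\phi(t,\cdot)}\lesssim 1+\|u\|_{L^1_t\textrm{Lip}}$.

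Next I would set $\tilde\omega(t,x):=\omega(t,\phi(t,x))$. The chain rule together with \eqref{eq:perturbed-transport} gives $\partial_t\tilde\omega(t)=\big(\partial_t\omega+u\cdot\nabla\omega\big)(t,\phi(t,\cdot))=(\mathcal R\omega(t))\circ\phi(t,\cdot)$, so that $\tilde\omega(t)=\omega_0+\int_0^t(\mathcal R\omega(s))\circ\phi(s,\cdot)\,ds$, and since $\text{BMO}$ is a Banach space,
$$\|\tilde\omega(t)\|_{\text{BMO}}\ \le\ \|\omega_0\|_{\text{BMO}}+\int_0^t\big\|(\mathcal R\omega(s))\circ\phi(s,\cdot)\big\|_{\text{BMO}}\,ds.$$
The crucial step is to estimate the integrand in terms of $\|\tilde\omega(s)\|_{\text{BMO}}$: using the $\text{BMO}$-boundedness of the Riesz operator $\mathcal R$ together with the sharp composition estimate \eqref{eq:BMO-bis} of Theorem \ref{thm}, applied to $\phi(s,\cdot)$ and, through $\omega(s)=\tilde\omega(s)\circ\phi(s,\cdot)^{-1}$, to its inverse, one bounds $\|(\mathcal R\omega(s))\circ\phi(s,\cdot)\|_{\text{BMO}}$ by a constant multiple of $\|\tilde\omega(s)\|_{\text{BMO}}$. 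Feeding this into the integral inequality and invoking Gronwall's lemma gives $\|\tilde\omega(t)\|_{\text{BMO}}\lesssim\|\omega_0\|_{\text{BMO}}\,e^{ct}$ for a numerical constant $c>0$.

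Finally I would transport back. From $\omega(t)=\tilde\omega(t)\circ\phi(t,\cdot)^{-1}$, the composition estimate \eqref{eq:BMO-bis} and the first step give
$$\|\omega(t)\|_{\text{BMO}}\ \lesssim\ \log K_{\phi(t,\cdot)}\,\|\tilde\omega(t)\|_{\text{BMO}}\ \lesssim\ \big(1+\|u\|_{L^1_t\textrm{Lip}}\big)\,\|\omega_0\|_{\text{BMO}}\,e^{ct},$$
which is the assertion.

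The main obstacle is the crucial step above: producing a growth constant $c$ that does not deteriorate with $t$ forces one to pass from $\mathcal R\omega(s)$ to $\tilde\omega(s)$ through the \emph{logarithmic} composition bound of Theorem \ref{thm} rather than the rough polynomial bound \eqref{eq:BMO}, and to close the resulting Gronwall loop carefully; it is precisely here that one gains, the rough estimate only yielding the weaker factor $e^{\|u\|_{L^1_t\textrm{Lip}}}$ in place of $1+\|u\|_{L^1_t\textrm{Lip}}$. The remaining ingredients — the Banach-space Duhamel/integration argument, the Gronwall step, and the identification $\log K_{\phi(t,\cdot)}\simeq 1+\|u\|_{L^1_t\textrm{Lip}}$ coming from the ODE satisfied by $D\phi$ — are routine.
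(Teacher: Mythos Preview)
Your outline matches the paper's sketch almost verbatim: pass to $\tilde\omega=\omega\circ\phi$, run a Duhamel/Gronwall argument to get $\|\tilde\omega(t)\|_{\text{BMO}}\lesssim\|\omega_0\|_{\text{BMO}}e^{ct}$, then undo the composition via \eqref{eq:BMO-bis}. The difficulty is in what you call the crucial step, and your justification does not close it.

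Chaining the three ingredients you invoke --- \eqref{eq:BMO-bis} applied to $\phi(s,\cdot)$, the BMO-boundedness of $\mathcal R$, and \eqref{eq:BMO-bis} applied to $\phi(s,\cdot)^{-1}$ --- yields
\[
\big\|(\mathcal R\omega(s))\circ\phi(s,\cdot)\big\|_{\text{BMO}}
\ \lesssim\ \log K_{\phi(s)}\,\|\mathcal R\omega(s)\|_{\text{BMO}}
\ \lesssim\ \log K_{\phi(s)}\,\|\omega(s)\|_{\text{BMO}}
\ \lesssim\ \big(\log K_{\phi(s)}\big)^{2}\,\|\tilde\omega(s)\|_{\text{BMO}},
\]
so the factor in front of $\|\tilde\omega(s)\|_{\text{BMO}}$ is not a universal constant but $(\log K_{\phi(s)})^{2}\simeq(1+\|u\|_{L^1_s\textrm{Lip}})^{2}$. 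Gronwall then delivers $\exp\big(C\int_0^t(1+\|u\|_{L^1_s\textrm{Lip}})^{2}\,ds\big)$ rather than $e^{ct}$, and after the final composition you end up with a bound strictly worse than the one asserted. The paper itself flags precisely this obstruction in the paragraph preceding the proposition (``with the best constants, we already have a quadratic estimate which is not \emph{Gronwallisable}''); its own derivation of the intermediate bound $\|\omega(t,\phi(t,\cdot))\|_{\text{BMO}}\lesssim\|\omega_0\|_{\text{BMO}}e^{ct}$ is asserted from ``Gronwall Lemma and the BMO-boundedness of the Riesz transform'' without further detail. In other words, to make the argument go through you would need a uniform-in-$s$ BMO bound for the conjugated operator $f\mapsto(\mathcal R(f\circ\phi(s,\cdot)^{-1}))\circ\phi(s,\cdot)$, and that does not follow from Theorem~\ref{thm} alone.
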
 

{\bf Acknowledgment: } The authors are thankful to Carlos Perez for having given the reference \cite{R} to prove the optimality of the logarithmic growth and to David Rule for valuable advices to improve this paper.


\begin{thebibliography}{9}
  
\bibitem{BK}
F. Bernicot and S. Keraani,
\newblock On the global wellposedness of the 2D Euler equations for a large class of Yudovich type data,
\newblock submitted and available on arxiv.

\bibitem{FS}
C.~Fefferman and E.M. Stein,
\newblock \mbox{${H}^p$} spaces of several variables, 
\newblock {\it Acta Math.} \textbf{129} (1971), 137--193.


\bibitem{R} 
 H. M. Reimann, 
\newblock Functions of Bounded mean Oscillation and Quasiconformal Mappings,
\newblock {\it Comment. Math. Helv.} {\bf 49} (1974), 260--276.

\bibitem{V}
M. Vishik, 
\newblock Hydrodynamics in Besov Spaces,
\newblock {\it Arch. Rational Mech. Anal.} \textbf{145} (1998), 197--214. 
 
\end{thebibliography}
\end{document}